\newtheorem{theorem}{Theorem}[section]
\newtheorem{proposition}[theorem]{Proposition}
\newtheorem{corollary}[theorem]{Corollary}
\theoremstyle{definition}
\newtheorem{definition}[theorem]{Definition}
\newtheorem{example}[theorem]{Example}
\theoremstyle{remark}
\numberwithin{equation}{section}
\newcommand{\mQ}{\mathbb{Q}}
\newcommand{\we}{\stackrel{\simeq}{\to}}
\newcommand{\fib}{\twoheadrightarrow}
\newcommand{\cofib}{\hookrightarrow}
\newcommand{\apl}{{\rm A_{PL}}}
\newcommand{\Ho}{{\rm H}}
\newcommand{\secat}{{\rm secat}}
\newcommand{\msecat}{{\rm msecat}}
\newcommand{\hsecat}{{\rm Hsecat}}
\newcommand{\cat}{{\rm cat}}
\newcommand{\nil}{{\rm nil}}
\newcommand{\im}{{\rm im}}
\newcommand{\hnil}{{\rm Hnil}}
\newcommand{\tc}{{\rm TC}}
\newcommand{\mtc}{{\rm mTC}}
\newcommand{\htc}{{\rm HTC}}
\newcommand{\mcat}{{\rm mcat}}
\newcommand{\id}{{\rm Id}}
\begin{document}

\title[Rational methods applied to sectional category and \tc]{Rational methods applied to sectional category and topological complexity}

%    Information for first author
\author{Jos\'e Carrasquel}
%    Address of record for the research reported here
\address{Faculty of Mathematics and Computer Science, Adam Mic\-kie\-wicz University, Umultowska 87, 61-614 Pozna\'n, Poland.}
%    Current address
%\curraddr{	Department of Mathematics and Statistics,
%Case Western Reserve University, Cleveland, Ohio 43403}
\email{jgcarras@amu.edu.pl, jgcarras@gmail.com}
%    \thanks will become a 1st page footnote.4
\thanks{The author was supported in part by the Belgian Interuniversity Attraction Pole (IAP) within the framework ``Dynamics, Geometry and Statistical Physics" (DYGEST P7/18) and the Polish National Science Centre grant 2016/21/P/ST1/03460 within the European Union's Horizon 2020 research and innovation programme under the Marie Sk\l odowska-Curie grant agreement No. 665778.}

%    Information for second author

%    General info
\subjclass[2010]{55M30, 55P62}
%\date{January 1, 1994 and, in revised form, June 22, 1994.}

%\dedicatory{This paper is dedicated to our advisors.}

\keywords{rational homotopy, topological complexity, Lusternik-Schnirelmann category, sectional category}

\begin{abstract}
	This survey is a guide for the non specialist on how to use rational homotopy theory techniques to get approximations of Farber's topological complexity, in particular, and of Schwarz's sectional category, in general.
\end{abstract}

\maketitle

\tableofcontents

\section*{Introduction}
The \emph{sectional category} \cite{Schwarz66} of a continuous map $f\colon X\rightarrow Y$ is the least integer $m$ for which there are $m+1$ local homotopy sections for $f$ whose domains form an open cover of $Y$. If $X$ is a path-connected topological space then two important invariants of the homotopy type of $X$ can be described through sectional category. The first one is the \emph{Lusternik-Schnirelmann (LS) category} of $X$ \cite{Lusternik34}, $\cat(X)$, which is the sectional category of the base point inclusion \[\cat(X)=\secat(*\cofib X).\] The second one is the \emph{(higher) topological complexity} \cite{Farber03,Rudyak10} of $X$, $\tc_n(X)$, which is the sectional category of the $n$ diagonal inclusion \[\tc_n(X)=\secat(X\cofib X^{n}).\]

In this paper we will only consider, unless stated otherwise, simply connected CW complexes of finite type. As a consequence, every time we write the word \emph{space}, we actually mean one of such CW complexes.\\

If $X$ is a space, then there exists a cofibration $\rho_X\colon X\cofib X_0$, called the \emph{rationalization} of $X$, which verifies:

\begin{itemize}
	\item $X_0$ is a \emph{rational space}, that is, $\Ho^*(X,\mathbb{Z}
	)$ (or equivalently $\pi_*(X)$) is a rational vector space.
	\item $\rho_X$ is a \emph{weak rational homotopy equivalence}, that is, $\Ho^*(\rho_X, \mQ)$ (or equivalently $\pi_*(\rho_X)\otimes\mQ$)	is an isomorphism.
	\item Every continuous map $f\colon X\rightarrow Y$, with $Y$ a rational space, factors uniquely up to homotopy:
	\begin{center}
		\begin{tikzcd}
			X_0\arrow[dr, "f_0"]&\\
			X\arrow[u, "\rho_X", hookrightarrow]\arrow[r, "f"']&Y\\
		\end{tikzcd}
	\end{center}
\end{itemize}

The main lower bound for sectional category is cohomological. Namely, if $f$ is a continuous map and $R$ is any ring, then \[\nil\ker \Ho^*(f,R)\le \secat(f),\] where $\nil\ I$ denotes the \emph{nilpotency} of an ideal $I$, that is, the longest non trivial product of elements in $I$. Often, this bound is not good enough. This is mainly because cohomology does not capture all the homotopic information of a map. However, rational homotopy theory offers a whole new set of algebraic lower bounds which are based on the following inequalities: \[\nil\ker \Ho^*(f,\mQ)\le \secat(f_0)\le \secat(f).\]

More precisely, there is a pair of contravariant adjoint functors 
	\begin{center}
		\begin{tikzcd}
		\apl\colon \mathbf{Top}\arrow[r, shift left]&\mathbf{cdga}\arrow[l, shift left]\ :|\cdot|
		\end{tikzcd}
	\end{center}
where

\begin{itemize}
	\item \textbf{cdga} is the category of simply connected commutative differential graded algebras over $\mQ$ of finite type (see Section \ref{sec:SullivansRtc}).
	
	\item $\mathbf{Top}$ is the category of simply connected  CW complexes of finite type.
\end{itemize}

When restricted to rational spaces, these functors actually yield an equivalence in the associated homotopy categories \cite{Bousfield76,Su77}. This means that all the rational homotopic information of a continuous map $f$ can be encoded algebraically through $\apl(f)$. From this algebraic object, one can deduce approximations for sectional category which are better than the cohomological lower bound,  \[\nil\ \ker H^*(f,\mQ)\le \hsecat(f)\le \msecat(f)\le \secat(f_0)\le\secat(f),\] by relaxing the algebraic characterization of $\secat(f_0)$.

\section{Sullivan's rational homotopy theory}\label{sec:SullivansRtc}
For a deep description of the tools we use, the reader is invited to read the standard reference on rational homotopy theory \cite{Bible}. We start describing the algebraic objects we use.\\

A \emph{commutative graded algebra} (cga) is a graded $\mQ$-vector space \[A=\bigoplus_{i\ge 0}A^i\] together with a degree zero bilinear map $\mu\colon A\otimes A\to A$, $a\otimes b\mapsto ab$, called \emph{multiplication}, verifying for $a,b,c\in A$,
\begin{itemize}
\item $A^0=\mQ$ and $a1=1a=a$,
\item $(ab)c=a(bc)$, and
\item $ab=(-1)^{|a||b|}ba$.
\end{itemize}
where $|a|=i$ means that $a\in A^i$ and we say that $a$ is \emph{homogeneous} of degree $i$. Observe that, if $a$ has odd degree, then $a^2=0$. A \emph{morphism} of cga $f\colon A\to B$ is a linear map of degree zero such that $f(1)=1$, $f(ab)=f(a)f(b)$. A \emph{derivation} of degree $k$ on $A$ is a degree $k$ linear map $\theta\colon A\to A$ verifying $\theta(ab)=\theta(a)b+(-1)^{k|a|}a\theta(b)$. A \emph{differential} on $A$ is a derivation $d$ of degree $1$ such that $d^2=d\circ d=0$.\\

A \emph{commutative differential graded algebra} (cdga) is a pair $(A,d)$ where $A$ is a cga and $d$ is a differential on $A$. A \emph{morphism} of cdga is a morphism of cga commuting with differentials. We denote by \textbf{cdga} this category. There exists a \emph{homology} functor $\Ho\colon \mathbf{cdga}\to \mathbf{cdga}$ defined as $\Ho(A,d):=\left(\frac{\ker d}{d(A)},0\right)$ and $\Ho(f)([z])=[f(z)]$. A morphism of cdga $f$ is said to be a \emph{quasi-isomorphism} if $\Ho(f)$ is an isomorphism.\\

The \emph{direct sum} of two cdgas is a cdga, $(A,d)\oplus (B,d):=(A\oplus_\mQ B,d)$, with $d(a+b)=d(a)+d(b)$ and $ab=0$ for $a\in A$ and $b\in B$.\\

The \emph{tensor product} of two cdgas is a cdga, $(A,d)\otimes (B,d):=(A\otimes_\mQ B,d)$, with $(a\otimes b)(a'\otimes b'):=(-1)^{|b||a'|}(aa')\otimes(bb')$ and $d(a\otimes b):= d(a)\otimes b + (-1)^{|a|}a\otimes d(b)$.\\

Given a graded $\mQ$-vector space $V$, define $TV=\bigoplus_{j\ge 0}T^jV$ where $T^0V=\mQ$ and $T^jV=V^{\otimes j}$. We have that $TV$ is a graded algebra with product \[(a_1\otimes\cdots \otimes a_j)(b_1\otimes\cdots \otimes b_k):=a_1\otimes\cdots \otimes a_j\otimes b_1\otimes\cdots \otimes b_k.\] Consider $I$ the ideal of $TV$ generated by elements of the form $a\otimes b - (-1)^{|a||b|} b\otimes a$ and define the \emph{free commutative graded algebra} generated by $V$: \[\Lambda V:=\frac{TV}{I}.\] We will write simply $ab$ to denote the element $[a\otimes b]\in\Lambda V$. We say that an element $v_1\cdots v_j\in\Lambda^jV$ has \emph{word length} $j$ and degree $|v_1|+\cdots +|v_j|$. We will denote by $A^+$ the elements of positive degree of a graded algebra $A$, and by $\Lambda^+ V$ the elements of positive word length of a free graded algebra. We will also employ notations such as $\Lambda^{>m}V$ to mean $\bigoplus_{j>m}\Lambda^j V$, and so on. Observe that there is an isomorphism $\Lambda V\otimes\Lambda W\cong\Lambda(V\oplus W)$ which we often will use implicitly.\\

The free commutative graded algebra $\Lambda V$ has the following two universal properties:\label{universalProps}

\begin{itemize}
	\item Given $A$ a cga and $f\colon V\to A$ a linear map of degree zero, then there exists an unique cga morphism $\hat{f}\colon \Lambda V\to A$ verifying $\hat{f}(v)=f(v)$ for all $v\in V$.
	\item If $\theta\colon V\to\Lambda V$ is a linear map of degree $k$ then there exists a unique derivation on $\Lambda V$ of degree $k$, $\hat{\theta}$, such that $\theta(v)=\hat{\theta}(v)$ for all $v\in V$. 
\end{itemize}

Let $(A,d)$ be a cdga. A \emph{relative Sullivan algebra} over $(A,d)$ is a cdga of the form $(A\otimes\Lambda V, D)$ such that $D(a)=d(a)$, if $a\in A$, and $V=\bigoplus_{k\ge 1} V(k)$ with $D(V(1))\subset A\otimes 1$ and \[D(V(k))\subset A\otimes \Lambda\left(V(1)\oplus\cdots\oplus V(k-1)\right) \mbox{ (\emph{nilpotence condition}).}\] If, in addition, the differential $D$ satisfies $D(V)\subset A^+\otimes\Lambda V+A\otimes\Lambda^{\ge 2}V$ we say that $(A\otimes \Lambda V, D)$ is a \emph{minimal relative Sullivan algebra}. Observe that the inclusion $i\colon (A,d)\rightarrow (A\otimes\Lambda V,D)$ is a cdga morphism. In fact, the category \textbf{cdga} is a closed model category \cite{Quillen67} (if we restrict to cdgas $(A,d)$ for which $\Ho^0(A,d)=\mQ$ \cite{Ha83}) with the following structure:

\begin{itemize}
	\item Weak equivalences: quasi-isomorphisms $(A,d)\we(B,d)$.
	\item Fibrations: surjective morphisms $(A,d)\fib(B,d)$.
	\item Cofibrations: Inclusions of a cdga $(A,d)$ into a relative Sullivan algebra \[(A,d)\cofib (A\otimes\Lambda V,D).\]
\end{itemize}

As a consequence we get the following important fact:\\

\textbf{Existence of relative Sullivan models:} Every cdga morphism $\varphi$ can be factored as
\begin{center}
\begin{tikzcd}
	(A,d)\arrow[rr, "\varphi"]\arrow[dr, hook, "i"']&&(B,d)\\
	&(A\otimes\Lambda V,D),\arrow[ur, "\theta"', "\simeq" ]
\end{tikzcd}
\end{center}
where either $\theta$ is surjective or $(A\otimes\Lambda V, D)$ is minimal (but not necessarily both at the same time).\\

An \emph{$(A,d)$-module homotopy retraction} for $\varphi$ is a chain map \[r\colon (A\otimes \Lambda V, D)\rightarrow (A,d)\] such that $r(a)=a$ and $r(a\xi)=ar(\xi)$ for all $a\in A$ and $\xi\in A\otimes\Lambda V$. Moreover, if $r$ is a cdga morphism, we say that it is a \emph{(cdga) homotopy retraction} for $\varphi$.\\

\textbf{The surjective trick:} Every cdga morphism $\varphi$ can be factored as
\begin{center}
	\begin{tikzcd}
		(A,d)\arrow[rr, "\varphi"]\arrow[dr, hook, "\simeq"']&&(B,d)\\
		&(A\otimes\Lambda V,D),\arrow[ur, twoheadrightarrow ]
	\end{tikzcd}
\end{center}

\textbf{The lifting lemma:} For any solid commutative cdga diagram, there exists a dashed arrow completing the commutative cdga diagram

\begin{center}
	\begin{tikzcd}
		(A,d)\arrow[r]\arrow[d, hook]&(B,d)\arrow[d, twoheadrightarrow ]\\
		(A\otimes\Lambda V,D)\arrow[r]\arrow[ur, dashed]&(C,d),
	\end{tikzcd}
\end{center}
provided that at least one of the vertical morphisms is a quasi-isomorphism.\\

Since \textbf{cdga} is a closed model category, there exists a notion of homotopy of maps $(\Lambda V, d)\rightarrow (B,d)$ which is nicely described in \cite[Chap. 12-14]{Bible}. Two cdga morphism $\varphi_1,\varphi_2$ are \emph{weakly equivalent} when there is a homotopy commutative cdga diagram

\begin{equation*}
\begin{tikzcd}
(A_1,d)\ar[d, "\varphi_1"']&(\Lambda V,d)\ar[d]\ar[l, "\simeq"']\ar[r, "\simeq"]&(A_2,d)\ar[d, "\varphi_2"]\\
(B_1,d)&(\Lambda W,d)\ar[l, "\simeq"']\ar[r, "\simeq"]&(B_2,d),
\end{tikzcd}
\end{equation*}
where $(\Lambda V,d)$ and $(\Lambda W,d)$ are Sullivan algebras (see below).

\subsection{Sullivan models}\label{ssec:SullivanMods}
In the special case that $(A,d)=(\mQ,0)$, the initial object of \textbf{cdga}, we get \emph{Sullivan} (perhaps \emph{minimal}) \emph{models} for a cdga $(B,d)$, $\theta\colon (\Lambda V, d)\we (B,d)$. These objects are very important as they are the fibrant-cofibrant objects in the category \textbf{cdga}. For us, unless stated otherwise, cdgas denoted in the form $(\Lambda V,d)$ will be assumed to be Sullivan algebras, the same applies to relative Sullivan algebras $(A\otimes\Lambda V,D)$.\\

To be more explicit, a \emph{Sullivan algebra} is a cdga of the form $(\Lambda V, d)$ verifying the \emph{nilpotence condition}, that is, $V=\bigoplus_{k\ge 1} V(k)$ with $D(V(1))=0$ and \[D(V(k))\subset \Lambda\left(V(1)\oplus\cdots\oplus V(k-1)\right).\] If moreover, the differential $D$ verifies $D(V)\subset \Lambda^{\ge 2}V$, we say that it is a \emph{minimal Sullivan algebra}. As with relative Sullivan models for cdga morphism, the construction of Sullivan models for a given cdga, $\theta\colon (\Lambda V,d)\we(B,d)$, can be carried out inductively, degree by degree, by adding generators to $V$ and defining their differentials to \emph{kill} or create the necessary homology classes in order to turn $\theta$ into a quasi-isomorphism. This process is very well explained in \cite[Pg. 144]{Bible} when $\Ho^1(B,d)=0$.\\

Let $(\Lambda V,d)$ be a Sullivan algebra and write $d=d_0+d_1+\cdots$ with $d_k(V)\subset \Lambda^{k+1}V$. The equation $d^2=0$ implies that $\sum_{i=0}^kd_id_{k-i}=0$ for all $k\ge 0$.  Observe then that $d_0$ is a differential on $V$, called the \emph{linear part} of $d$, which gives a chain complex $(V,d_0)$. Analogously, given a cdga morphism $\varphi\colon (\Lambda V,d)\rightarrow (\Lambda W,d)$ define the \emph{linear part} of $\varphi$ as the chain complex morphism \[Q(\varphi)\colon (V,d_0)\rightarrow (W,d_0)\] defined by $(\varphi-Q(\varphi))(V)\subset \Lambda^{\ge 2}W$.\\

We can now state some important facts about Sullivan algebras which will be used throughout this survey\cite[Prop. 14.13, Thm. 14.12]{Bible}.

\begin{proposition}\label{prop:SullivanAlgs} Let $\varphi\colon (\Lambda V,d)\rightarrow (\Lambda W,d)$ be a cdga morphism between Sullivan algebras.
	\begin{itemize}
		\item Then $\varphi$ is a quasi-isomorphism if and only if $\Ho(Q(\varphi))$ is an isomorphism.
		\item If $(\Lambda V, d)$ and $(\Lambda W,d)$ are minimal, then $\varphi$ is a quasi-isomorphism if and only if $\varphi$ is an isomorphism.
	\end{itemize} 
\end{proposition}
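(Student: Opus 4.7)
The plan is to introduce the decreasing word-length filtration $F^p = \Lambda^{\ge p}V$ on $(\Lambda V, d)$, which is $d$-stable because $d$ is a derivation with $d(V) \subset \Lambda^{\ge 1}V$, and which is bounded in each total degree thanks to simple connectivity ($V^{\le 1}=0$ forces degree-$n$ elements to have word length at most $n/2$). The resulting spectral sequence converges to $\Ho(\Lambda V, d)$ with $E_0$-page $(\Lambda V, d_0)$, where $d_0$ denotes the linear part extended as a derivation. Decomposing $V = \Ho(V, d_0) \oplus A$ with $(A, d_0)$ acyclic (a direct sum of Koszul pairs $c \mapsto d_0c$), Künneth yields $\Ho(\Lambda V, d_0) \cong \Lambda \Ho(V, d_0)$, so $E_1 \cong \Lambda \Ho(V, d_0)$. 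As a cga morphism, $\varphi$ preserves the filtration and induces on $E_1$ the algebra map $\Lambda \Ho(Q(\varphi))$. The easy implication of (1) then falls out of the spectral sequence comparison theorem: if $\Ho(Q(\varphi))$ is an isomorphism then so is $E_1(\varphi)$, whence so is $E_\infty(\varphi)$, and $\varphi$ is a quasi-isomorphism.

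For the converse of (1), I would reduce to the minimal case by choosing minimal Sullivan models $m_V : (\Lambda \bar V, \bar d) \we (\Lambda V, d)$ and $m_W : (\Lambda \bar W, \bar d) \we (\Lambda W, d)$. Standard model-categorical arguments (the lifting lemma together with the surjective trick applied to $m_W$) produce a cdga morphism $\tilde \varphi : (\Lambda \bar V, \bar d) \to (\Lambda \bar W, \bar d)$ with $m_W \tilde \varphi \simeq \varphi m_V$, which is a quasi-isomorphism by two-out-of-three. Minimality forces $\bar d_0 = 0$, so $\Ho(\bar V, 0) = \bar V$; and by construction of minimal models, $Q(m_V)$ realizes a quasi-isomorphism of chain complexes $(\bar V, 0) \we (V, d_0)$, and similarly for $W$. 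Both the converse of (1) and statement (2) are thereby reduced to the single claim that a quasi-isomorphism between minimal Sullivan algebras is an isomorphism.

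The main obstacle is thus this minimal case. Minimality ensures $\Lambda^{\ge 2}\bar V$ is a $\bar d$-stable ideal with $\bar d(\bar V) \subset \Lambda^{\ge 2}\bar V$, producing the short exact sequence
\[
0 \to (\Lambda^{\ge 2}\bar V, \bar d) \to (\Lambda \bar V, \bar d) \to (\mQ \oplus \bar V, 0) \to 0.
\]
I would induct on the total degree $n$ using the associated long exact cohomology sequences for $\bar V$ and $\bar W$. Because simple connectivity forces every element of $(\Lambda^{\ge 2}\bar V)^k$ for $k \le n+1$ to involve only generators of degree $\le n-1$, the inductive hypothesis makes $\tilde \varphi$ an iso on $\Ho^n(\Lambda^{\ge 2}\bar V)$ and $\Ho^{n+1}(\Lambda^{\ge 2}\bar V)$; combined with $\tilde \varphi$ being a quasi-isomorphism on $\Lambda \bar V$, a five-lemma forces the induced map on $\bar V^n$ to be an iso, closing the induction and showing $Q(\tilde \varphi)$ is an iso. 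Finally, $Q(\tilde \varphi)$ iso makes $\mathrm{gr}(\tilde \varphi) = \Lambda Q(\tilde \varphi)$ iso on word-length associated gradeds, and boundedness of the filtration in each degree promotes this to $\tilde \varphi$ itself being an isomorphism.
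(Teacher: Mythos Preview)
The paper does not prove this proposition; it is stated with a citation to \cite[Prop.~14.13, Thm.~14.12]{Bible} and used as a black box. So there is no in-paper argument to compare against, and your proposal must stand on its own. The overall architecture---word-length spectral sequence for the forward direction of (1), reduction to the minimal case for the converse, and a degree-by-degree five-lemma for (2)---is the standard one and is sound. Two steps need more care.

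First, in reducing the converse of (1) to (2): you produce $\tilde\varphi$ via the lifting lemma so that $m_W\tilde\varphi \simeq \varphi m_V$ only up to homotopy, and then implicitly pass to a commuting square after applying $\Ho\circ Q$. But it is not clear that homotopic cdga maps have equal $\Ho(Q(-))$, so the square on linear parts may fail to commute. The clean fix is to invoke the decomposition $(\Lambda V,d)\cong(\Lambda\bar V,\bar d)\otimes(\Lambda E,d)$ with $(\Lambda E,d)$ contractible (this is presumably what your phrase ``by construction of minimal models'' is gesturing at), take $m_V$ as the inclusion and let $p_W$ be the projection killing $E_W$, and set $\tilde\varphi := p_W\circ\varphi\circ m_V$ strictly. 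Since $Q$ is functorial on strict composites, $Q(m_V)$ and $Q(p_W)$ are visibly quasi-isomorphisms on linear parts, and one gets $\Ho(Q(\varphi))\cong Q(\tilde\varphi)$ directly.

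Second, in the inductive step for (2): the assertion that the hypothesis yields an isomorphism on $\Ho^{n+1}(\Lambda^{\ge 2}\bar V)$ is too fast. Computing $\Ho^{n+1}$ involves $(\Lambda^{\ge 2}\bar V)^{n+2}$, which can contain generators of degree $n$, not yet controlled. What the hypothesis \emph{does} give is injectivity on $\Ho^{n+1}(\Lambda^{\ge 2})$ (a cocycle in degree $n+1$ and a potential bounding cochain in degree $n$ both lie in $\Lambda(\bar V^{\le n-1})$), and this suffices, via the four-lemma, for $Q(\tilde\varphi)^n$ to be injective. Now bootstrap: $Q(\tilde\varphi)$ injective through degree $n$ forces $\tilde\varphi$ injective on $(\Lambda^{\ge 2}\bar V)^{n+2}$ (associated graded plus bounded filtration), which upgrades the map on $\Ho^{n+1}(\Lambda^{\ge 2})$ to an isomorphism and closes the five-lemma. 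With this two-pass refinement the induction is complete.
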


\subsection{The connection with topology}
Given a space $X$, we say that a cdga $(A,d)$ is a \emph{model} for $X$ if $\apl(X)$ is weakly equivalent to $(A,d)$, that is, if there is a Sullivan algebra $(\Lambda V,d)$ and quasi-isomorphisms

\begin{center}
	\begin{tikzcd}
		(A,d)&(\Lambda V,d)\arrow[l, "\simeq"'] \arrow[r, "\simeq"] & \apl(X).
	\end{tikzcd}
\end{center}
In this case, we say that $(\Lambda V,d)$ is a \emph{Sullivan model} of $X$ (and of $(A,d)$). These special \emph{cofibrant} models are suitable for encoding the rational homotopic information of $X$. In fact, by Proposition \ref{prop:SullivanAlgs}, two minimal models for a space $X$ are isomorphic. In fact, we have \cite[Cor. 10.10, Thm. 15.11]{Bible}

\begin{theorem}\label{th:connectinTopol}
	If $(\Lambda V,d)$ is \emph{the} minimal Sullivan model for $X$, then 

\begin{itemize}
	\item $\Ho^*(X,\mQ)\cong \Ho(\apl(X))\cong \Ho(\Lambda V,d)$, and
	\item for every $i\ge 1$ there is a natural linear isomorphism $V^i\rightarrow \hom_\mathbb{Z}(\pi_i(X),\mQ)$.
\end{itemize}
\end{theorem}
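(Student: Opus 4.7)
The first chain of isomorphisms decomposes into two pieces. The rightmost isomorphism $\Ho(\apl(X))\cong\Ho(\Lambda V,d)$ is immediate from the definition of a Sullivan model, since the zig-zag of quasi-isomorphisms connecting $(\Lambda V,d)$ and $\apl(X)$ induces an isomorphism after applying $\Ho$. For the leftmost isomorphism $\Ho^*(X,\mQ)\cong\Ho(\apl(X))$, my plan is to exhibit a natural zig-zag of quasi-isomorphisms between $\apl$ and the rational singular cochain functor $C^*(-;\mQ)$: at the simplicial level one constructs an integration map $\int\colon\apl(\Delta^n)\rightarrow C^*(\Delta^n;\mQ)$, checks by direct computation that it is a quasi-isomorphism on the standard simplices, and extends naturality of the statement to an arbitrary space via its singular simplicial set together with an acyclic-models argument.

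For the second bullet, the plan is first to identify $|(\Lambda V,d)|$ with the rationalization $X_0$ of $X$, and then to compute homotopy groups of realizations of minimal Sullivan algebras. The contravariant adjunction $\apl\dashv|\cdot|$ converts the quasi-isomorphism $(\Lambda V,d)\we\apl(X)$ into a map $\eta\colon X\rightarrow|(\Lambda V,d)|$; combining the first bullet with the fact that $|(\Lambda V,d)|$ is a simply connected rational space, one concludes that $\eta$ is a rationalization and hence $\pi_i(|(\Lambda V,d)|)\cong\pi_i(X)\otimes\mQ$. To compute these groups directly from the minimal model, I would use the degree filtration $V^{\le n}=\bigoplus_{i\le n}V^i$, which is stable under $d$ thanks to minimality and simple connectivity, and whose realization recovers the Postnikov tower of $|(\Lambda V,d)|$. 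Each stage is a principal fibration whose fibre is a product of rational Eilenberg--MacLane spaces with minimal model $(\Lambda V^n,0)$; matching this against $\pi_n$ at the space level produces the natural pairing $V^n\otimes\pi_n(X)\rightarrow\mQ$.

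The main technical obstacle is justifying these two ``realization'' assertions: that $|(\Lambda V,d)|$ is genuinely a model for $X_0$, and that the degree filtration of the minimal model matches the rational Postnikov decomposition of $X$. Both rely on knowing the explicit minimal model $(\Lambda(x),0)$ with $|x|=n$ for $K(\mQ,n)$, together with a careful analysis of how $|\cdot|$ behaves on pullbacks along $k$-invariants. Once these are in hand, the induced pairing $V^n\otimes\pi_n(X)\rightarrow\mQ$ amounts concretely to evaluating cohomology classes on spheres, and its non-degeneracy follows because minimality forces the linear part $d_0$ to vanish on $V$, so that generators of $V^n$ are detected by dual elements of $\pi_n$. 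Naturality in $X$ is then automatic from the functorial uniqueness of minimal models (Proposition \ref{prop:SullivanAlgs}) together with the functoriality of $\pi_*$.
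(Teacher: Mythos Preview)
The paper does not prove this theorem; it is stated with a citation to \cite[Cor.~10.10, Thm.~15.11]{Bible} (F\'elix--Halperin--Thomas) and used as background. Your outline is essentially the argument given in that reference: the polynomial de~Rham theorem via the integration map and an acyclic-models comparison for the first bullet (this is exactly Cor.~10.10 there), and the identification of the degree filtration of a minimal Sullivan algebra with the rational Postnikov tower of its realization for the second (Thm.~15.11). So there is nothing to compare against in the paper itself, and your sketch matches the source the paper defers to.
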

A cdga morphism $\varphi$ is said to be a \emph{model} for a continuous map $f$ if $\varphi$ is weakly equivalent to $\apl(f)$.\\

We will often use a special notation for describing Sullivan algebras which can be understood by the following example: $(\Lambda(a_i,b_j,c_k); db=\alpha, dc=\gamma)$ means $(\Lambda V,d)$ with $V$ spanned by $a,b,c$ of degrees $i,j,k$ respectively and with $da=0$, $db=\alpha\in\Lambda(a)$ and $dc=\gamma\in \Lambda(a,b)$. The universal properties of p. \pageref{universalProps} show that the previous $(\Lambda V,d)$ is well defined.

\begin{example}
Let us compute the minimal Sullivan models for spheres. We have to construct a quasi isomorphism $\theta\colon (\Lambda V,d)\we \apl(S^n)$. Recall that \[\Ho^*(\apl(S^n))=\mQ\left\langle  1,\Omega \right\rangle\] with $\Omega$ the fundamental class of $S^n$. So $V$ must have one generator of degree $n$ which is a cycle, say $a$. Take $\omega\in \apl(S^n)$ a cycle in degree $n$ representing $\Omega$ and define
\begin{center}
	\begin{tikzcd}[row sep=tiny]
		(\Lambda (a),0)\arrow[r, "\theta"]&\apl(S^n)\\
		a\arrow[r, mapsto]&\omega\\
	\end{tikzcd}
\end{center}

Here the dimension is crucial. If $n$ is odd, then $a^2=0$ and $\Ho(\theta)$ is an isomorphism. Therefore $(\Lambda (a),0)$ is the minimal model of an odd sphere. However, if  $n$ is even, then $\Ho(\Lambda (a), 0)=\mQ\langle 1, a, a^2, a^3,\ldots\rangle$ and $\theta$ cannot be a quasi-isomorphism, this means that we need to add new generators in $V$ that turn $a^2, a^3,\ldots$ into boundaries. So let $x$ be a new generator of degree $2n-1$ and define $d(x)=a^2$. Then in $(\Lambda(a,x),d)$ we have

\begin{center}
	\begin{tikzcd}[row sep=tiny]
		1&a&a^2&a^3&a^4\cdots \\
		&x\arrow[ur, "d"']&ax\arrow[ur, "d"']&a^2x\arrow[ur, "d"']&a^3x\cdots
	\end{tikzcd}
\end{center}
which shows that $\Ho(\Lambda(a,x),d)\cong \Ho(\apl(S^n))$. However, we still have to define the quasi-isomorphism $\theta$ to make sure that $(\Lambda(a,x),d)$ is the minimal model of an even sphere. Observe that, in $\Ho^*(\apl(S^n))$, $\Omega^2=0$, this means that there exists $\xi\in\apl(S^n)$ of degree $2n-1$ such that $d(\xi)=\omega^2$. We then define $\theta$ as

\begin{center}
	\begin{tikzcd}[row sep=tiny]
		(\Lambda (a,x),d)\arrow[r, "\theta"]&\apl(S^n)\\
		a\arrow[r, mapsto]&\omega\\
		x\arrow[r, mapsto]&\xi.
	\end{tikzcd}
\end{center}
Observe finally that $d(\theta(x))=d(\xi)=\omega^2=\theta(a)^2=\theta(a^2)=\theta(d(x))$. This proves that $\theta$ is a quasi-isomorphism showing that $(\Lambda(a,x),d)$ is the minimal Sullivan model of an even sphere. Notice that, as corollary, we get the Serre finiteness theorem for the homotopy groups of spheres.
\end{example}

In the previous example, there exists also a quasi-isomorphism \[(\Lambda V,d)\we (\Ho(\Lambda V, d),0)\] defined as $a\mapsto [a]$ and $x\mapsto 0$. When this happens we say that the space is \emph{formal}. More precisely, a cdga $(A,d)$ is said to be \emph{formal} when $(A,d)$ and $(\Ho(A),0)$ are weakly equivalent. A space $X$ is said to be \emph{formal} when $\apl(X)$ is a formal cdga.

\begin{example}\label{examp:NotFormalSpace}
	A cdga which is not formal. Consider \[(\Lambda V,d)=(\Lambda(a_3,b_3,x_5), dx=ab)\] and observe that, as vector spaces, \[\Lambda V=\mQ \langle 1,a,b,x,ab,ax,bx,abx \rangle\mbox{ and}\] \[\Ho(\Lambda V,d)=\mQ\langle 1, [a], [b], [ax], [bx], [abx]\rangle.\]  For degree reasons, any cdga morphism $\varphi \colon (\Lambda V,d)\rightarrow (\Ho(\Lambda V,d),0)$ must satisfy $\varphi(x)=0$, but then, we would have $\Ho(\varphi)([ax])=[\varphi(ax)]=[\varphi(a)0]=0$, so $\varphi$ cannot be a cdga quasi-isomorphism. This proves that $(\Lambda V,d)$ is not \emph{the} minimal model of the cdga $(\Ho(\Lambda V, d),0)$ and therefore $(\Lambda V,d)$ is not formal.
\end{example}

A cdga morphism $\varphi$ is said to be \emph{formal} when it is weakly equivalent to $\Ho(\varphi)$. A continuous map $f$ is said to be \emph{formal} if $\apl(f)$ is formal \cite{Vig79,Oprea86,FT88} or equivalently, when $\Ho^*(f,\mQ)$ is a model for $f$. Obviously, if a map $f\colon X\rightarrow Y$ is formal then both $X$ and $Y$ are formal spaces, however, the converse is not true as we will see in Example \ref{example:FibreHopfFib}.

\subsection{Models for homotopy pullbacks}\label{sect:ModelsForHpb}
One of the most frequent ways in rational homotopy theory to construct models is through homotopy pullbacks and pushouts. Suppose that either $f$ or $g$ is a fibration in the following commutative diagram

\begin{equation}\label{diag:CommSquare}
	\begin{tikzcd}
		A\ar[d, "\alpha"']\ar[r, "\beta"]&X\ar[d, "f"]\\
		Y\ar[r, "g"']&Z.
	\end{tikzcd}
\end{equation}
Then the universal property of pullbacks induces a diagram

\begin{equation}\label{diag:pullbackwhisker}
\begin{tikzcd}
	A\ar[drr, bend left, "\beta"]\ar[ddr, bend right, "\alpha"']\ar[dr, "{(\alpha,\beta)}" description]&&\\
	&T\ar[d, "\hat{f}"']\ar[r, "\hat{g}"]&X\ar[d, "f"]\\
	&Y\ar[r, "g"']&Z.
\end{tikzcd}
\end{equation}
in which the square is a (homotopy) pullback\cite{Ma76}. We now explain how to construct cdga models for the previous diagram. Suppose we have a commutative cdga square modeling Diagram \ref{diag:CommSquare} (applying $\apl$ for example),

\begin{equation*}
\begin{tikzcd}
(A,d)\ar[d, "g"']\ar[r, "f"]&(B,d)\ar[d, "\beta"]\\
(C,d)\ar[r, "\alpha"']&(D,d),
\end{tikzcd}
\end{equation*}
where the name of a cdga morhpism is the same as the name of the continuous maps it models (for instance, $f\colon (A,d)\rightarrow (B,d)$ is a cdga model for $f\colon X\rightarrow Z$). We will often adopt this convention. Now choose $f$ or $g$ and factor it as a cofibration followed by a weak equivalence, say $f=\theta\circ i$ with $i\colon (A,d)\cofib (A\otimes\Lambda V, D)$ and $\theta\colon (A\otimes\Lambda V, D)\we (B,d)$. Since $C\otimes_A(A\otimes\Lambda V)\cong C\otimes\Lambda V$, diagram \ref{diag:pullbackwhisker} is modeled by

\begin{equation*}
\begin{tikzcd}
&(B,d)\ar[dddr, bend left, "\beta"]\\
(A,d)\arrow[ur, "f"]\arrow[r, hookrightarrow, "i"']\arrow[d, "g"']& (A\otimes\Lambda V,D)\arrow[u, "\simeq", "\theta"']\arrow[d, "\hat{g}"]\\
(C,d)\ar[drr, bend right, "\alpha"']\arrow[r, hookrightarrow, "\hat{f}"']&(C\otimes\Lambda V,\overline{D})\ar[dr, "{(\alpha,\beta)}" description]\\
&&(D,d)
\end{tikzcd}
\end{equation*}
in which the square is a pushout in \textbf{cdga}, $\hat{g}(a)=g(a)$, $g(v)=v$, $\overline{D}(v)=\hat{g}(Dv)$ for $a\in A$ and $v\in V$. Observe also that the induced morphism $(\alpha,\beta)$ is given by $(\alpha,\beta)(c)=\alpha(c)$, $(\alpha,\beta)(v)=\beta(\theta(v))$.\\

\subsection{Models for fibrations}

Let $p\colon E\fib B$ be a fibration with fiber $F$. Then we have a (homotopy) pullback

\begin{equation*}
\begin{tikzcd}
F\ar[d]\ar[r]\ar[d]&E\ar[d, twoheadrightarrow, "p"]\\
*\ar[r, hook]&B.
\end{tikzcd}
\end{equation*}
Following previous section, take $i\colon(A,d)\cofib (A\otimes \Lambda V,D)$ a relative Sullivan model for $p$. Then we have a model for the previous diagram

\begin{equation*}
\begin{tikzcd}
(A,d)\ar[r, hook, "i"]\ar[d, twoheadrightarrow]&(A\otimes \Lambda V, D)\ar[d, twoheadrightarrow, "q"]\\
(\mQ,0)\ar[r, hook] & (\Lambda V, \overline{D}),
\end{tikzcd}
\end{equation*}
where the projection \[q\colon (A\otimes\Lambda V,D)\fib \left(\frac{A\otimes\Lambda V}{A^+\otimes\Lambda V}, \overline{D}\right)\cong (\Lambda V, \overline{D})\] is a model for the inclusion $F\cofib E$\cite[Theorem 15.3]{Bible}.

\begin{example}[The Hopf fibration]\label{example:FibreHopfFib} Consider the Hopf fibration $p\colon S^7\rightarrow S^4$. A cdga model for $p$ is the projection $q\colon(\Lambda (a_4,x_7),dx=a^2)\fib (\Lambda (x),0)$, since there is a bijection $[S^7_0,S^4_0]\cong [(\Lambda(a,x),d),(\Lambda(x),0)]$. Construct now a relative Sullivan model for $q$ (and thus, for $p$) as follows: Since $\Ho(q)([a])=0$, introduce a generator $y$ of degree $3$ and define $Dy=a$. Now extend $q$ to \[\theta\colon (\Lambda(a,x)\otimes \Lambda (y),D)\rightarrow (\Lambda(x),0)\] by setting $\theta(y)=0$. Now observe that $\Ho((\Lambda(a,x)\otimes \Lambda (y),D))=\mQ\langle 1, [x-ay]\rangle$, therefore, $\theta$ is a quasi-isomorphism (Proposition \ref{prop:SullivanAlgs}). This gives a model of the fiber of $f$ \[\left(\frac{\Lambda(a,x)\otimes \Lambda (y)}{\Lambda^+(a,x)\otimes \Lambda (y)},\overline{D} \right)\cong(\Lambda(y),0),\] which is $S^3$. Observe lastly that $\Ho(q)$ is trivial but $p$ is not, this shows that $p$ is a map between formal spaces which is not formal.
\end{example}

\subsection{Models for homotopy pushouts}
Dually, one can also model homotopy pushouts throught pullbacks provided one of the cdga models is surjective. Consider 
\begin{equation*}
\begin{tikzcd}%[row sep=tiny]
&(Q,d)\ar[d, twoheadrightarrow]\ar[r]&(A,d)\ar[d, twoheadrightarrow, "f"]\\
&(B,d)\ar[r, "g"']&(X,d),
\end{tikzcd}
\end{equation*}
where $(Q,d)$ is the sub cdga of the direct sum $(A,d)\oplus(B,d)$, \[Q=\left\{(a,b)\in A\oplus_\mQ B\colon f(a)=g(b)\right\}.\]

\subsection{Models for cofibrations} If $i\colon C\cofib X$ is a cofibration modeled by a surjective cdga morphism $\varphi\colon (A,d)\rightarrow (B,d)$, the projection $X\rightarrow X/C$ is modeled by the the inclusion \[(\mQ\oplus\ker\varphi,d)\rightarrow (A,d).\] The weakness of this approach, in contrast with the models of fibration, is that the model for the cofiber, $\mQ\oplus\ker\varphi$, need not be a Sullivan model. 

\begin{example}[The homotopy cofibre of the Hopf fibration]
Recall $q$ the model of the Hopf fibration $p$ in Example \ref{example:FibreHopfFib}. Then a model for the homotopy cofiber of $p$ is given by $\mQ\oplus \ker q=\mQ\oplus (a)$. One can see that the Sullivan model for this space is given by $(\Lambda (a_4,y_{11}), dy=a^3)$.
\end{example}

\subsection{Models for the base point inclusion}
Let $(A,d)$ be a cdga model for a space $X$ with $A^0=\mQ$, then the \emph{augmentation} morphism $\epsilon\colon (A,d)\to (\mQ,0)$ is a cdga model for the base point inclusion $*\cofib X$. Observe that this inclusion is homotopy equivalent to the based path space fibration $e\colon PX\rightarrow X$, $\alpha\mapsto\alpha(1)$ whose fiber is the loop space $\Omega X$. Now let $\theta\colon(\Lambda V,d)\we(A,d)$ be the minimal Sullivan model for $(A,d)$ and take a relative Sullivan model for the augmentation $(\Lambda V,d)\rightarrow (\mQ,0)$ which is of the form $(\Lambda V\otimes\Lambda\hat{V},D)$ with $D_0\colon \hat{V}\rightarrow V$ a degree 1 linear isomorphism. Moreover, since the fiber of $e$, $\Omega X$, is an H-space, by \cite[Pg. 143]{Bible} we must have that $D(\hat{V})\subset \Lambda^+V\otimes\Lambda\hat{V}$. Now the following homotopy pushout diagrams gives a relative model for $\epsilon$
\begin{equation*}
\begin{tikzcd}
(\Lambda V,d)\ar[r, hook]\ar[d, "\theta"', "\simeq"]&(\Lambda V\otimes\Lambda\hat{V},D)\ar[d, "\theta\otimes\id"',"\simeq"]\ar[ddr, bend left, "\simeq"]\\
(A,d\ar[drr, bend right, "\epsilon"'])\ar[r, hook]&(A\otimes\Lambda\hat{V},\overline{D})\ar[dr, "\simeq"]&\\
&&(\mQ,0),
\end{tikzcd}
\end{equation*}
where $D(\hat{v})=(\theta\otimes\id)(D\hat{v})\in A^+\otimes\Lambda\hat{V}=(\ker\epsilon)\otimes\Lambda\hat{V}.$ This construction is called the \emph{acyclic closure} of $(A,d)$\cite[p. 192]{Bible}.

\subsection{Models for the diagonal map}\label{sec:modelForDiagonal}
Let $X$ be a space and $\Delta_X^n\colon X\cofib X^n$ the $n$ diagonal map $x\mapsto (x,x,\ldots,x)$. If $(A,d)$ is a cdga model for $X$, then the $n$-multiplication morphism $\mu_n\colon (A^{\otimes n},d)\fib A$ is a surjective cdga model for $\Delta_X^n$\cite[Pg. 142]{Bible}.\\

Let us now give a model of $\frac{X^n}{\Delta_X^n(X)}$, the cofibre of $\Delta^n_X$. If $a\in A$, denote by $a_i\in A^{\otimes n}$ the corresponding inclusion of $a$ into the $i$-th factor. Let $G$ be a set of generators for $A$, that is, $A^+=(G)$, the ideal of $A$ generated by $G$. By \cite[Section 6]{Carrasquel10}, $\ker \mu_n$ is generated by $\left\{x_1-x_i\colon x\in G, i=2,\ldots, n\right\}$. We see then that a cdga model for $\frac{X^n}{\Delta_X^n(X)}$ is $\mQ\oplus \left(\left\{x_1-x_i\colon x\in G, i=2,\ldots, n\right\}\right)$.\\

The diagonal inclusion $\Delta_X^n$ is homotopy equivalent to the path fibration $\pi_n\colon X^I \fib X^n$, $\alpha\mapsto \alpha\left(0,\frac{1}{n-1},\frac{2}{n-1}\ldots,\frac{n-2}{n-1},1\right)$ whose fiber is the product of based loops on $X$, $(\Omega X)^{n-1}$.\\

Let us now construct a cdga model for this fibration. Let $(\Lambda V,d)$ be a Sullivan model for $X$, then by \cite[Pg. 206]{Bible} a relative Sullivan model for the multiplication \[\mu_2\colon(\Lambda V_1\otimes\Lambda V_2,d)\rightarrow(\Lambda V,d),\] where $V\cong V_1\cong V_2,$ is given by $(\Lambda V_1\otimes\Lambda V_2\otimes \Lambda \hat{V},D)$, together with $\hat{V}=sV$, $\hat{V}^k=V^{k+1}$ and \[D\hat{v}=v_2-v_1-\sum_{k\ge 1}\frac{(sD)^k}{k!}(v_1).\] The map $s$ is the degree $-1$ derivation defined by $s(v_1)=s(v_2)=\hat{v}$, $s(v)=0$. Here, subscripts are used to distinguish copies of $V$. Observe that $D(\hat{V})\subset(\ker\mu_2)\oplus (\Lambda V_1\otimes\Lambda V_2\otimes \Lambda^+\hat{V})$ therefore a quasi-isomorphism \[\theta\colon (\Lambda V_1\otimes\Lambda V_2\otimes \Lambda \hat{V},D)\we(\Lambda V, d)\] is defined by $\theta(v_1)=\theta(v_2)=v$ and $\theta(\hat{v})=0$. Now $\pi_3$ fits in the homotopy pullback
\begin{equation*}
\begin{tikzcd}[column sep=huge]
X^I\ar[r]\ar[d, "\pi_3"']&X^I\times X^I\ar[d, "\pi_2\times\pi_2"]\\
X\times X\times X\ar[r, "\id\times\Delta_2\times\id"']&X\times X\times X\times X
\end{tikzcd}
\end{equation*}
which is modeled through the pushout
\begin{equation*}
\begin{tikzcd}
&(\Lambda V,d)\otimes(\Lambda V,d)\\
(\Lambda V\otimes\Lambda V,d)\otimes(\Lambda V\otimes\Lambda V,d)\ar[ur, "\mu_2\otimes\mu_2"]\ar[d, "\id\otimes\mu_2\otimes\id"']\ar[r, hook]&(\Lambda V\otimes\Lambda V\otimes\Lambda\hat{V},D)\otimes(\Lambda V\otimes\Lambda V\otimes\Lambda\hat{V},D)\ar[d]\ar[u, "\theta\otimes\theta"', "\simeq" ]\\
(\Lambda V_1\otimes\Lambda V_2\otimes\Lambda V_3,d)\ar[r, hook]&(\Lambda V_1\otimes\Lambda V_2\otimes\Lambda V_3\otimes\Lambda \hat{V}_1\otimes\Lambda\hat{V_2},D).
\end{tikzcd}
\end{equation*}
An inductive argument proves that the relative Sullivan model for $\mu_n$ is given by the inclusion \[(\Lambda V_1\otimes\cdots\otimes\Lambda V_n,d)\cofib(\Lambda V_1\otimes\cdots\otimes\Lambda V_n\otimes\Lambda( \hat{V}_1\oplus \cdots\oplus \hat{V}_{n-1}),D)\] where \[D\hat{v}_i=v_{i+1}-v_i-\sum_{k\ge 1}\frac{(s_iD)^k}{k!}(v_i),\ s_i(v_i)=s_i(v_{i+1})=\hat{v_i}.\]
Observe, in particular that the minimal model for the homotopy fibre of $\pi_n$ is given by \[(\Lambda sV,0)^{\otimes n-1}\cong(\Lambda sV^{\oplus n-1},0).\] This corresponds to the model given in \cite[Pg. 143]{Bible} for H-spaces and the fact that $\pi_{i-1}(\Omega X)\cong \pi_i(X)$.\\

\begin{example} Even spheres $S^n$. Let $(\Lambda(a,x), dx=a^2)$ be the minimal model for $S^n$. Then, with previous notation, the multiplication morphism is given by \[\mu_n\colon (\Lambda(a_1,\ldots, a_n,x_1,\ldots, x_n),d)\rightarrow (\Lambda(a,x),d),\]
with $\mu_n(a_i)=a$, $\mu_n(x_i)=x$, and $dx_i=a_i^2$, $i=1,\ldots, n$. Therefore a relative Sullivan model for $\mu_m$ is given by \[(\Lambda(a_1,\ldots, a_n,x_1,\ldots, x_n)\otimes\Lambda(\hat{a}_1,\ldots, \hat{a}_{n-1},\hat{x}_1,\ldots \hat{x}_{n-1}),D),\] with $|a_i|=n-1$ and $|x_i|=2n-2$. Let us now compute the differentials. Since $da_i=0$ we have $D\hat{a}_i=a_{i+1}-a_i$. We now compute $(s_iD)^k(x_i)$ using the fact that $D$ and $s_i$ are derivations:
\begin{itemize}
	\item $k=1$: $(s_iD)(x_i)=s_i(a_i^2)=2a_is(a_i)=2a_i\hat{a}_i$,
	\item $k=2$: $(s_iD)^2(x_i)=2(s_iD)(a_i\hat{a}_i)=2s_i(a_i(a_{i+1}-a_i))=2\hat{a}_i(a_{i+1}-a_i)$,
	\item $k=3$: $(s_iD)^3(x_i)=2(s_iD)(\hat{a}_i(a_{i+1}-a_i))=2s_i(a_{i+1}-a_i)^2=0.$
\end{itemize}
Therefore \[D\hat{x_i}=x_{i+1}-x_i-(2a_i\hat{a}_i+\hat{a}_i(a_{i+1}-a_i))=x_{i+1}-x_i-\hat{a}_i(a_{i+1}+a_i).\] Observe that $D^2=0$ and that the morphism \[\theta\colon ((\Lambda(a_1,\ldots, a_n,x_1,\ldots, x_n)\otimes\Lambda(\hat{a}_1,\ldots, \hat{a}_{n-1},\hat{x}_1,\ldots \hat{x}_{n-1}),D))\rightarrow (\Lambda(a,x),d)\] $\theta(a_i)=a$, $\theta(x_i)=x$, $\theta(\hat{a}_i)=\theta(\hat{x}_i)=0$ is a cdga quasi-isomorphism since $\Ho(Q(\theta))$ is an isomorphism.
	
\end{example}
\section{Rational Lusternik-Schnirelmann category}
We present this section as an overview of the rational homotopy techniques used to study LS category. The rational methods used to study sectional category are generalizations of the ones used for LS category.\\

We begin stating the F\'elix-Halperin theorem \cite[Theorem VIII]{Felix82} which gives an algebraic characterization of the LS category of rational spaces. This theorem is key in the development of computation methods for LS category.

\begin{theorem}\label{th:FelixHalperinCharac}
	Let $X$ be a space and $(\Lambda V,d)$ a Sullivan model for $X$. Then the rational LS category of $X$, $\cat(X_0)$, is the least $m$ for which the cdga projection \[\rho_m\colon (\Lambda V,d)\rightarrow \left(\frac{\Lambda V}{\Lambda^{> m} V},\overline{d} \right)\] admits a homotopy retraction.
\end{theorem}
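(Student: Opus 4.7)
My approach would follow F\'elix-Halperin and hinge on Ganea's topological characterization of LS category combined with its translation through Sullivan models. Recall first that, by Ganea, $\cat(Y)\le m$ if and only if the $m$-th Ganea fibration $g_m\colon G_m(Y)\fib Y$ admits a (homotopy) section. Here $G_0(Y)=PY\to Y$ is the based path-space fibration with fibre $\Omega Y$, and inductively $G_m(Y)\to Y$ is obtained by turning the fibre inclusion $F_{m-1}\cofib G_{m-1}(Y)$ into its mapping cone and then replacing the resulting map to $Y$ by a fibration. So the strategy is to reduce ``$\rho_m$ admits a homotopy retraction'' to ``$g_m$ admits a (rational) section'' and invoke this characterization applied to $Y=X_0$.

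The bridge is an \emph{algebraic Ganea tower}. Starting from the acyclic closure $(\Lambda V,d)\cofib(\Lambda V\otimes\Lambda\hat V,D)\we(\mQ,0)$ constructed in the subsection on the base point inclusion, I would iteratively build, for each $m\ge 0$, a relative Sullivan model $(\Lambda V,d)\cofib(\Lambda V\otimes \Lambda W_m,D_m)\we(B_m,d)$ of $g_m$ by applying the cofibre construction (subsection on models for cofibrations) to the fibre of the previous stage and then re-fibrating via the surjective trick. An inductive comparison should show that this tower is weakly equivalent, over $(\Lambda V,d)$, to the projection $\rho_m\colon(\Lambda V,d)\fib(\Lambda V/\Lambda^{>m}V,\bar d)$. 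The geometric intuition is that the fibre of $g_m$ rationally has the homotopy type of a join of $m+1$ copies of $\Omega X$, and the algebraic counterpart of ``killing the join'' is precisely truncating at word length $m$.

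Once this identification is achieved, the equivalence is formal. Since $\apl$ is contravariant, a (homotopy) section of $g_m$ after rationalization corresponds, via the Sullivan-Bousfield equivalence of rational homotopy categories, to a cdga morphism $r\colon (\Lambda V\otimes\Lambda W_m,D_m)\to(\Lambda V,d)$ retracting the inclusion, i.e.\ a cdga homotopy retraction of the algebraic Ganea model. By the lifting lemma, applied to the weak equivalence $(\Lambda V\otimes\Lambda W_m,D_m)\we (\Lambda V/\Lambda^{>m}V,\bar d)$ that covers $\rho_m$, such a retraction exists if and only if $\rho_m$ itself admits a homotopy retraction. Chaining these equivalences yields $\cat(X_0)\le m \iff \rho_m \text{ has a homotopy retraction}$, which is exactly the statement.

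\textbf{Main obstacle.} The non-trivial step is the inductive identification in the second paragraph: showing that the algebraic Ganea tower and the elementary truncation tower $\{\rho_m\}$ are weakly equivalent as relative Sullivan models over $(\Lambda V,d)$. This requires delicate bookkeeping of successive fibre/cofibre models and a verification, using Proposition \ref{prop:SullivanAlgs}, that the linear parts match up at each stage. Once in place, everything else reduces to the model-categorical machinery of \textbf{cdga} recalled in Section \ref{sec:SullivansRtc} together with Ganea's theorem.
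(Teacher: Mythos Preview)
The paper does not actually supply its own proof of this theorem: it is stated with a citation to \cite[Theorem VIII]{Felix82} and used as a black box. Later, after Theorem~\ref{th:CharacterisationSecat}, the paper remarks that the F\'elix--Halperin theorem is the special case obtained by taking the augmentation as an s-model for the base point inclusion, but Theorem~\ref{th:CharacterisationSecat} is itself only cited from \cite{Carrasquel14c}, not proved here. So there is no in-paper argument to compare your proposal against.

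That said, your outline is sound and is essentially the classical route: identify a relative Sullivan model of the $m$-th Ganea fibration with the projection $\rho_m$, then translate ``section of $g_m$ over $X_0$'' into ``cdga homotopy retraction of $\rho_m$'' via the Sullivan--de Rham equivalence and the lifting lemma. You have correctly isolated the only substantive step, namely the inductive weak equivalence between the algebraic Ganea tower and the word-length truncations. One caution: carrying this out strictly via iterated fibre/cofibre models as you describe is workable but fiddly; the cleaner argument in the literature (and the one implicit in Section~\ref{ssec:FirstAlgCharac} of this paper, specialized to $\varphi=\epsilon$) goes through the Whitehead/fat-wedge side, modeling $T^m(*\cofib X)\hookrightarrow X^{m+1}$ by the quotient $(\Lambda V)^{\otimes m+1}\fib (\Lambda V)^{\otimes m+1}/(\Lambda^+V)^{\otimes m+1}$ and then pushing out along $\mu$ to get $\rho_m$ directly, avoiding the inductive bookkeeping altogether.
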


\begin{example}\label{example:FelixHalpNeedSullivan}
	Let $X=S^7$ and $(\Lambda V,d)= (\Lambda (a),0)$, with $|a|=7$, its minimal model. Since $\rho_1=\id$ and $X$ is not contractible, we see that $\cat(X_0)=\cat(X)=1$. Now take a cdga model for $X$ of the form \[A=\left(\frac{\Lambda (a_4,x_3)}{(a^2)},dx=a\right),\] the projection $\rho_1\colon(A,d)\rightarrow \left(\frac{A}{(A^+)^2},d\right)$ is not homology injective since the fundamental class is represented by $ax\in (A^+)^2$. Therefore $\rho_1$ cannot have a homotopy retraction. This shows that, in previous theorem, it is necessary to take a Sullivan model for the space.
\end{example}

One can define new invariants by weakening the requirements on homotopy retractions, namely
\begin{itemize}
	\item the \emph{module LS category} of $X$, $\mcat(X)$, as the smallest $m$ such that $\rho_m$ admits a homotopy retraction as $(\Lambda V,d)$-module,
	\item the \emph{rational Toomer invariant} of $X$, $e(X)$, as the least $m$ such that $\Ho(\rho_m)$ is injective.
\end{itemize}

Hence
\begin{corollary}\label{coro:inequalitiesCat}
	Let $X$ be a space and $(A,d)$ be any cdga model for $X$, then \[\nil\ \Ho^+(X,\mQ)\le e(X)\le\mcat(X)\le\cat(X_0)\le \cat(X)\le \nil\ A^+.\] If $X$ is formal then one can take $(A,d)=(\Ho(A),0)$ and all the above are thus equalities.
\end{corollary}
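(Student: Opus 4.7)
The plan is to verify each link of the chain separately. The three rational invariants $e(X)$, $\mcat(X)$ and $\cat(X_0)$ are defined by progressively weakening the algebraic condition on the Sullivan projection $\rho_m\colon(\Lambda V,d)\to(\Lambda V/\Lambda^{>m}V,\bar d)$ of Theorem \ref{th:FelixHalperinCharac}, so the three middle inequalities cascade from the definitions. Reading from right to left: $\mcat(X)\le\cat(X_0)$ is immediate since a cdga homotopy retraction is a fortiori an $(\Lambda V,d)$-module homotopy retraction. For $e(X)\le\mcat(X)$, a module retraction $r$ of $\rho_m$ yields $\Ho(r)\circ\Ho(\rho_m)=\id$, so $\Ho(\rho_m)$ is injective. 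Finally, for $\nil\Ho^+(X,\mQ)\le e(X)$, set $m=e(X)$; any product of $m{+}1$ positive cohomology classes is represented by the product of cycles in $\Lambda^+V$, which lies in $\Lambda^{>m}V=\ker\rho_m$, and the injectivity of $\Ho(\rho_m)$ forces the class to vanish.

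The inequality $\cat(X_0)\le\cat(X)$ is a direct instance of the general rationalization bound $\secat(f_0)\le\secat(f)$ recorded in the Introduction, applied to $f\colon{*}\cofib X$, whose rationalization is the base-point inclusion ${*}\cofib X_0$.

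For the last link $\cat(X)\le\nil A^+$, the strategy is to use the cdga model $A$ to construct a cdga retraction of $\rho_m$ where $m=\nil A^+$. Picking a minimal Sullivan model $\theta\colon(\Lambda V,d)\we(A,d)$, the assumption $(A^+)^{m+1}=0$ forces $\theta(\Lambda^{>m}V)=0$, so $\theta$ factors as $\theta=\bar\theta\circ\rho_m$ through $\bar\theta\colon(\Lambda V/\Lambda^{>m}V,\bar d)\to(A,d)$. Composing $\bar\theta$ with a homotopy inverse to $\theta$ — produced by factoring $\theta$ as a Sullivan cofibration followed by a surjective quasi-isomorphism and then applying the lifting lemma to fabricate a cdga homotopy retraction of the Sullivan inclusion — yields the desired cdga retraction of $\rho_m$, and Theorem \ref{th:FelixHalperinCharac} then delivers $\cat(X_0)\le m$. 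The genuinely difficult step I expect is the final passage $\cat(X_0)\le m\Rightarrow\cat(X)\le m$, since this crosses from rational algebra back to integral topology and requires an input outside the purely Sullivan framework developed so far.

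Finally, the formal case: the choice $A=(\Ho^*(X,\mQ),0)$ identifies $\nil A^+$ with $\nil\Ho^+(X,\mQ)$ and pinches the whole chain between equal endpoints, forcing all inequalities to become equalities.
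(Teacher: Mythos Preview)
Your argument is essentially identical to the paper's: the paper proves only the first inequality (by the same word-length argument you give) and the ``last'' inequality (by factoring the quasi-isomorphism $\theta\colon(\Lambda V,d)\we(A,d)$ through $\rho_m$ and then producing a cdga homotopy retraction of $\rho_m$ via a relative Sullivan model and the lifting lemma, exactly as you do). The intermediate inequalities are left to the definitions in both accounts.

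Your hesitation about the passage $\cat(X_0)\le m\Rightarrow\cat(X)\le m$ is entirely justified, and in fact this implication is \emph{false}: a rational cdga model only controls $\cat(X_0)$, not $\cat(X)$. For instance, a Moore space $X=M(\mathbb{Z}/p,n)$ with $n\ge 2$ is simply connected and formal with $(\mQ,0)$ as model, so $\nil A^+=0$, yet $\cat(X)=1$. The paper's own proof establishes only $\cat(X_0)\le\nil A^+$ (it ends by exhibiting a homotopy retraction for $\rho_m$, which is precisely the F\'elix--Halperin criterion for $\cat(X_0)\le m$), so the appearance of $\cat(X)$ sandwiched between $\cat(X_0)$ and $\nil A^+$ in the displayed chain is a slip in the statement rather than a gap you are expected to close. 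The same remark applies to the formal case: the pinching argument forces $\nil\Ho^+(X,\mQ)=e(X)=\mcat(X)=\cat(X_0)$, but not $\cat(X)$. You should therefore read the corollary with $\cat(X_0)\le\nil A^+$ as the final link and $\cat(X_0)\le\cat(X)$ as a separate observation; with that reading your proof is complete and matches the paper's.
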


\begin{proof}
	For the first inequality, suppose $e(X)=m$ and consider \[[x_0],[x_1],\dots,[x_m]\in\Ho^+(X,\mQ)\cong \Ho^+(\Lambda V,d).\] Since one can identify $x_i\in \Lambda^+V$, we have $x_0x_1\cdots x_m\in\Lambda^{>m}V$ and therefore, $[x_0x_1\cdots x_m]\in \ker \Ho(\rho_m)$. Since, by hypothesis, $\Ho(\rho_m)$ is injective, we must have $[x_0][x_1]\cdots[x_m]=[x_0x_1\cdots x_m]=0$. Let us now prove the last inequality. Suppose $\nil\ A^+=m$,  and take a quasi-isomorphism $\theta\colon (\Lambda V,d)\rightarrow (A,d)$. Since $(A^+)^{m+1}=0$, $\theta$ factors through $\rho_m$ as
	\begin{equation*}
	\begin{tikzcd}[row sep=tiny]
	(\Lambda V,d)\ar[dr, twoheadrightarrow, "\rho_m"']\ar[rr, "\theta", "\simeq"']&&(A,d)\\
	&(\frac{\Lambda V}{\Lambda^{>m}V},\overline{d}).\ar[ur, "\overline{\rho}_m"']		
	\end{tikzcd}
	\end{equation*}
	Then taking a relative Sullivan model for $\rho_m$, the lifting lemma gives a homotopy retraction $r$ for $\rho_m$:
	\begin{equation*}
	\begin{tikzcd}%[row sep=tiny]
	&(\Lambda V,d)\ar[r,"\id"]\ar[d, hook]\ar[dl, "\rho_m"']&(\Lambda V,d)\ar[d, "\theta", "\simeq"']\\
	(\frac{\Lambda V}{\Lambda^{>m}V},\overline{d})&(\Lambda V\otimes\Lambda Z,D)\ar[l, "\xi", "\simeq"']\ar[r, "\overline{\rho}_m\circ\xi"']\ar[ur, dashed, "r"']&(A,d)
	\end{tikzcd}
	\end{equation*}
		
\end{proof}

\begin{proposition}\label{prop:LScatVodd}
	If $X$ is a space such that $\pi_*(X)\otimes \mQ$ is finite dimensional and concentrated in odd degrees, then \[\cat(X_0)=\dim \pi_*(X)\otimes\mQ.\]
\end{proposition}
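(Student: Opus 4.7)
The plan is to use the Félix–Halperin characterization (Theorem \ref{th:FelixHalperinCharac}) together with the chain of inequalities in Corollary \ref{coro:inequalitiesCat}, squeezing $\cat(X_0)$ between the same integer $n$ from above and below. Let $(\Lambda V,d)$ denote the minimal Sullivan model of $X$. By Theorem \ref{th:connectinTopol}, $V$ is finite dimensional and concentrated in odd degrees, with $\dim V = \dim \pi_*(X)\otimes \mQ =: n$. Note also that simple connectedness forces $V^1=0$, so every generator satisfies $|v|\ge 3$.

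For the upper bound $\cat(X_0)\le n$, the key observation is that $\Lambda V$ is an exterior algebra on $n$ odd generators, hence $\Lambda^{>n}V=0$. Therefore the projection $\rho_n\colon (\Lambda V,d)\to (\Lambda V/\Lambda^{>n}V,\overline{d})$ is the identity, which obviously admits a homotopy retraction (itself). Theorem \ref{th:FelixHalperinCharac} then gives $\cat(X_0)\le n$.

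For the lower bound, I would use $\cat(X_0)\ge e(X)$ from Corollary \ref{coro:inequalitiesCat} and argue $e(X)\ge n$ by exhibiting a nonzero cohomology class represented by a word of length $n$. The natural candidate is $\omega = v_1 v_2\cdots v_n$, where $v_1,\dots,v_n$ is a basis of $V$. Since $d$ is a derivation and $d\omega\in \Lambda^{\ge n+1}V=0$, $\omega$ is a cocycle. As $\rho_{n-1}(\omega)=0$, it is enough to verify that $[\omega]\ne 0$ in $\Ho(\Lambda V,d)$; then $\Ho(\rho_{n-1})$ fails to be injective, giving $e(X)\ge n$.

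The main (and only real) step is showing $[\omega]\ne 0$, which I plan to do by a degree-counting argument. Suppose $\omega = d\eta$. Then $|\eta| = \sum_i |v_i|-1$. Decompose $\eta=\sum_{k} \eta_k$ according to word length, $\eta_k\in\Lambda^k V$, each summand still of degree $\sum_i|v_i|-1$. For $k=n$, $\eta_n$ would be a scalar multiple of $\omega$ and hence have degree $\sum_i|v_i|$, ruling out this component. For $k<n$, $\eta_k$ is a linear combination of products $v_{i_1}\cdots v_{i_k}$ with $\{i_1,\dots,i_k\}$ a $k$-subset $S$; the degree of each monomial is $\sum_{i\in S}|v_i|$, so matching it with $\sum_i|v_i|-1$ forces $\sum_{i\notin S}|v_i|=1$. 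This is impossible since $|S^c|=n-k\ge 1$ and each $|v_i|\ge 3$. Hence every $\eta_k$ vanishes, so $\eta=0$, contradicting $d\eta=\omega\ne 0$. Combining both bounds yields $\cat(X_0)=n$.
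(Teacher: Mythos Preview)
Your proof is correct and follows essentially the same approach as the paper's: both bound $\cat(X_0)$ above by observing $\rho_n=\id$ since $\Lambda^{>n}V=0$, and below by showing the top word $\omega=v_1\cdots v_n$ gives a nonzero class killed by $\rho_{n-1}$. The only difference is cosmetic: the paper asserts $[\omega]\neq 0$ by appealing to ``the nilpotence condition'' and degree reasons, while you spell out the degree count explicitly (which is arguably clearer, since it shows directly that $(\Lambda V)^{|\omega|-1}=0$).
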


\begin{proof}
	By Theorem \ref{th:connectinTopol}, the minimal model of $X$ is of the form $(\Lambda V,d)=(\Lambda( a_1,\ldots,a_n),d)$ with $a_i$ in odd degree and $n=\dim \pi_*(X)\otimes\mQ$. Since $\Lambda^{n+1}V=0$ then $\rho_n=\id$. Now consider the element $\omega:=a_1\cdots a_n\in\Lambda^nV$. For degree reasons, $d(\omega)=0$ and by the nilpotence condition of a Sullivan algebra, $\omega$ cannot be a boundary. This shows that $\rho_n$ has a homotopy retraction and $\rho_{n-1}$ does not as it is not homology injective.
\end{proof}

The following is a first example of how rational homotopy theory gives better lower bounds for sectional category than the standard cohomological ones.

\begin{example} Let $X$ be the space of Example \ref{examp:NotFormalSpace} and recall its minimal model $(\Lambda V,d)=(\Lambda(a,b,x),d)$ with $|a|=|b|=3$ and $d(x)=ab$. By the previous proposition, $\cat(X_0)=3$. However $\nil\ \Ho^+(\Lambda V,d)=2$.
\end{example}

The following is a surprising result by Hess which is a second pillar for the study of rational LS category.

\begin{theorem}[\cite{He91}]\label{th:Hess}
	Let $X$ be a space and $(\Lambda V,d)$ be its minimal model. Then the projection \[\rho_m\colon (\Lambda V,d)\rightarrow \left(\frac{\Lambda V}{\Lambda^{> m} V},\overline{d} \right)\] admits a cdga homotopy retraction if and only if it admits a homotopy retraction as $(\Lambda V,d)$-module. In particular $\mcat(X)=\cat(X_0)$.  
\end{theorem}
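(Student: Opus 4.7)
The implication ``cdga retraction $\Rightarrow$ module retraction'' is immediate, since every cdga morphism is in particular a $(\Lambda V,d)$-module morphism; the content lies in the converse, and in the equality $\mcat(X)=\cat(X_0)$ that follows from it once Theorem~\ref{th:FelixHalperinCharac} is combined with the definition of $\mcat$.

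My plan for the converse begins by reducing to a strict (non-homotopy) retraction problem. Using the surjective trick, factor $\rho_m$ as
\[
(\Lambda V,d)\stackrel{j}{\cofib}(\Lambda V\otimes\Lambda Z,D)\stackrel{\xi}{\fib}\bigl(\Lambda V/\Lambda^{>m}V,\overline d\bigr),
\]
with $j$ a minimal relative Sullivan algebra and $\xi$ a surjective quasi-isomorphism. Applying the lifting lemma against $\xi$, a (module, respectively cdga) homotopy retraction of $\rho_m$ is the same data as an on-the-nose (module, respectively cdga) retraction of $j$ composed with $\xi$. The task thus reduces to: given a $(\Lambda V,d)$-module retraction $r$ of $j$, produce a cdga retraction $\tilde r$ of $j$.

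I would construct $\tilde r$ by induction along the Sullivan filtration $Z=\bigcup_{k\ge 1}Z(k)$, maintaining simultaneously a $(\Lambda V,d)$-module chain homotopy $h$ satisfying $dh+hd=r-\tilde r$. The base case sets $\tilde r|_{\Lambda V}=\id$ and $h=0$. Supposing $\tilde r$ has been defined as a cdga map on $\Lambda V\otimes\Lambda\bigl(Z(1)\oplus\cdots\oplus Z(k-1)\bigr)$ with a compatible $h$, minimality forces $Dz\in\Lambda V\otimes\Lambda\bigl(Z(1)\oplus\cdots\oplus Z(k-1)\bigr)$ for any $z\in Z(k)$, so $\tilde r(Dz)$ is already determined; multiplicativity extends $\tilde r$ to $\Lambda^{\ge 2}Z(k)$ automatically, and the only freedom is in choosing $\tilde r(z)\in\Lambda V$, subject to $d\tilde r(z)=\tilde r(Dz)$. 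The obstruction is the class $[\tilde r(Dz)]\in \Ho(\Lambda V,d)$, and the comparison $h$ identifies $\tilde r(Dz)$ with $r(Dz)=d(r(z))$ up to a coboundary, so the obstruction vanishes and both $\tilde r(z)$ and an extended $h$ can be chosen.

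The hard part is the coherence of this induction. At each step the choice of $\tilde r(z)$ and the extension of $h$ must be made compatibly with all previously defined products, so that multiplicativity, the retraction identity $\tilde r\circ j=\id$, and the homotopy equation $dh+hd=r-\tilde r$ all persist as more generators of $Z$ are added. This is the technical core of Hess's argument: it uses minimality of $j$ essentially (each stage depends only on strictly earlier ones) and the fact that corrections to $\tilde r(z)$ live inside $\Lambda V$ where they can be absorbed without affecting the retraction on the $\Lambda V$-factor. Once the induction is complete, $\tilde r$ together with $\xi$ yields the desired cdga homotopy retraction of $\rho_m$, and the resulting equality of the smallest such $m$ gives $\mcat(X)=\cat(X_0)$.
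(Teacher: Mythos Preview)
The paper does not prove this theorem: it is stated with a citation to Hess \cite{He91} and then used as a black box, so there is no ``paper's own proof'' to compare your attempt against.

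As a sketch of Hess's actual argument your outline is in the right spirit---reduce to a strict retraction of a relative Sullivan model and build a multiplicative retraction inductively while tracking the discrepancy from the given module retraction by a homotopy---but two points deserve care. First, your factorisation asks for $j$ minimal \emph{and} $\xi$ surjective simultaneously; the paper itself warns (just before ``The surjective trick'') that one cannot in general have both, and in fact the argument only needs a relative Sullivan model, not a minimal one. Second, and more seriously, the paragraph you label ``the hard part'' is exactly where the substance lies: the homotopy $h$ is an $(A,d)$-module map, not a derivation, so extending it over products of new generators while preserving $dh+hd=r-\tilde r$ is not automatic from the universal property of $\Lambda$. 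Hess's proof handles this via a careful perturbation/obstruction bookkeeping that your sketch gestures at but does not supply; without it the inductive step is not justified. So your proposal is a correct high-level roadmap of the cited result rather than a proof, and since the paper itself only cites the theorem, that is arguably all that is called for here.
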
 

Using this result, F\'elix-Halperin-Lemaire\cite{FHL98} proved

\begin{theorem}\label{th:FelixHalpLemai}
Let $X$ and $Y$ be spaces. Then
\begin{itemize}
	\item $\cat(X_0\times Y_0)=\cat(X_0)+\cat(Y_0)$,
	\item If $X$ is a Poincar\'e duality complex (see Section \ref{ssec:PoincareDuality}), then $e(X)=\cat(X_0).$
\end{itemize}
\end{theorem}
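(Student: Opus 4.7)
The plan is to handle both assertions via Hess's Theorem \ref{th:Hess}, which replaces the rational $\cat$ by the algebraically flexible $\mcat$; in both parts the substantive step is to construct (or destroy) a $(\Lambda V,d)$-module homotopy retraction for a projection $\rho_m$ on a minimal Sullivan model.

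\medskip

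\textbf{Additivity on products.} The inequality $\cat(X_0\times Y_0)\le\cat(X_0)+\cat(Y_0)$ is classical and holds integrally. For the reverse inequality, let $(\Lambda V,d)$ and $(\Lambda W,d)$ be the minimal Sullivan models of $X$ and $Y$, so that $(\Lambda V\otimes\Lambda W,d)\cong(\Lambda(V\oplus W),d)$ is the minimal Sullivan model of $X\times Y$. Setting $m=\mcat X$ and $n=\mcat Y$, I would argue contrapositively: starting from a hypothetical $(\Lambda(V\oplus W),d)$-module homotopy retraction for \[\rho_{m+n-1}\colon\Lambda(V\oplus W)\twoheadrightarrow\Lambda(V\oplus W)/\Lambda^{>m+n-1}(V\oplus W),\] and exploiting the bihomogeneous decomposition $\Lambda^{>m+n-1}(V\oplus W)=\bigoplus_{i+j>m+n-1}\Lambda^iV\otimes\Lambda^jW$, I would restrict along a fixed cocycle $w$ of word length exactly $n$ in $\Lambda W$ (whose existence is forced by $\mcat Y=n$) to produce a $(\Lambda V,d)$-module homotopy retraction for $\rho_{m-1}$, contradicting $\mcat X=m$. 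Hess's Theorem then closes the argument.

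\medskip

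\textbf{Poincar\'e duality case.} Corollary \ref{coro:inequalitiesCat} already gives $e(X)\le\cat(X_0)$, so it remains to prove $\cat(X_0)\le e(X)$. By Hess's Theorem it suffices to construct a $(\Lambda V,d)$-module homotopy retraction for $\rho_m$ with $m=e(X)$. Let $[\omega]\in\Ho^N(\Lambda V,d)$ be the fundamental class; by the definition of $e$ together with Poincar\'e duality, I may assume a cocycle representative $\omega\in\Lambda^{\le m}V$. I would use the non-degenerate pairing $\Ho^k(\Lambda V,d)\otimes\Ho^{N-k}(\Lambda V,d)\to\mQ$ together with the injectivity of $\Ho(\rho_m)$ to specify the retraction uniquely on cohomology, and promote it to a chain-level $(\Lambda V,d)$-linear map by fixing a relative Sullivan model $(\Lambda V,d)\cofib(\Lambda V\otimes\Lambda Z,D)\we(\Lambda V/\Lambda^{>m}V,\overline{d})$ for $\rho_m$ and applying the lifting lemma.

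\medskip

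The hardest step in each part is the passage from a cohomology-level recipe to a genuine $(\Lambda V,d)$-linear chain-level retraction: in the product case one must check that restriction along the chosen top-word-length cocycle $w$ preserves the homotopy data, not merely the strict algebraic retraction; in the Poincar\'e duality case one must lift the duality-determined section of $\Ho(\rho_m)$ through a relative Sullivan model of $\rho_m$ in a $(\Lambda V,d)$-linear way. Both obstacles are of the same structural nature and should yield to systematic use of the lifting lemma with a judicious choice of relative Sullivan model for $\rho_m$.
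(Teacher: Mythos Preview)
The paper does not prove this theorem; it only quotes it from \cite{FHL98}. So there is no ``paper's own proof'' to compare against, and I assess your sketch on its own terms.

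Your outline for the Poincar\'e duality statement is essentially the right strategy, and it is in fact the mechanism behind the more general result quoted later in Section~\ref{ssec:PoincareDuality}: Poincar\'e duality lets one upgrade injectivity of $\Ho(\rho_m)$ to an $(A,d)$-module homotopy retraction, and then Hess's theorem closes the gap to $\cat_0$. The lifting step you flag as delicate is handled by working in the proper model category of $(\Lambda V,d)$-modules, where the analogue of the lifting lemma is available; this is routine once set up.

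The additivity argument, however, has a genuine gap. You claim that $\mcat Y=n$ ``forces'' the existence of a cocycle $w\in\Lambda^nW$ along which to restrict. For the restriction-by-$w$ map to carry any information, $w$ must represent a nonzero cohomology class; but that is precisely the assertion $e(Y)\ge n$, i.e.\ $e(Y)=\mcat(Y)$. The Lemaire--Sigrist example given immediately after this theorem in the paper shows this can fail: there $e=2$ while $\mcat=\cat_0=3$. So the cocycle you need simply does not exist in general, and your contrapositive construction collapses. The actual F\'elix--Halperin--Lemaire argument does not single out a cocycle in $\Lambda W$; it works instead with the module-theoretic structure of $\rho_m$ (semifree models and how module retractions interact with tensor products), and it is exactly this that sidesteps the $e<\mcat$ phenomenon. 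Your plan would need a fundamentally different device for the hard inequality.
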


In this theorem, the hypothesis of Poincar\'e duality is necessary:

\begin{example}[\cite{SL81}]
	Let $X$ be the space modeled by the cdga \[(A,d)=\left(\frac{\Lambda(a,b,x)}{(a^4,ab,ax)},d\right),\] with $|a|=2$, $|b|=3$, $d(a)=d(b)=0$ and $d(x)=a^3$. Observe that, as vector spaces, $A=\mQ\langle 1,a,b,a^2,x,a^3,bx\rangle$ and $\Ho(A,d)=\mQ\langle 1,[a],[b],[a]^2,[bx]\rangle$. This shows that $2\le\cat(X_0)\le 3$. The minimal model of $X$ is of the form $(\Lambda V,d)$ with $V=\mQ\langle a,b,v,x,w\rangle\oplus V^{\ge 7}$, $d(a)=d(b)=0$, $d(v)=ab$, $d(x)=a^3$ and $d(w)=bv$. Then $\Ho(\Lambda V,d)=\mQ\langle 1,[a],[b],[a]^2,[bx+a^2v]\rangle$, and there is no element in $\Lambda^{>2}V$ representing a non zero class in $\Ho(\Lambda V,d)$. This shows that $e(X)=2$.\\
	
	We now prove by contradiction that $\cat(X_0)=3$. Suppose that $\rho_2$ admits a cdga homotopy retraction $r$. This implies that $\Ho(r)\circ \Ho(\rho_2)=\id$ but this is absurd since $d(x)\in\Lambda^3 V$ and, for degree reasons,  \[(\Ho(r)\circ \Ho(\rho_2))([bx+a^2v])=H(r)([bx+a^2v])=H(r)([b][x])=H(r)([b])0=0.\]
\end{example}

\subsection{The mapping theorem for LS category}
Another important consequence of Theorem \ref{th:CharacterisationSecat} is the so called mapping theorem \cite[Theorem I]{Felix82}. We include a simple proof based on the proof of \cite[Pg. 389]{Bible} which we will later on extend to Theorem \ref{th:MappingThTC}.

\begin{theorem}\label{th:MappingThCat}
	Let $f\colon Y\to X$ be a continuous map. If $\pi_*(f)\otimes \mQ$ is injective then $\cat(Y_0)\le \cat(X_0)$.
\end{theorem}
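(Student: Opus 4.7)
Plan: My approach will be to reduce to a statement about module category via Hess' Theorem \ref{th:Hess}, and then transfer retractions along $\varphi$ by a tensor-product argument. I would begin by fixing minimal Sullivan models $(\Lambda V,d)$ and $(\Lambda W,d)$ of $X$ and $Y$, and a cdga morphism $\varphi\colon(\Lambda V,d)\to(\Lambda W,d)$ modelling $f$. By Theorem \ref{th:connectinTopol}, the linear part $Q(\varphi)\colon V\to W$ is dual to $\pi_*(f)\otimes\mQ$, so the hypothesis translates to $Q(\varphi)$ being surjective.

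The first step would be to upgrade this to surjectivity of $\varphi$ itself. For $w\in W^n$, I would pick $v\in V$ with $Q(\varphi)(v)=w$ and write $\varphi(v)=w+\xi$ with $\xi\in\Lambda^{\ge 2}W$. Since $V^{\le 1}=W^{\le 1}=0$ by simple-connectedness, the factors of $\xi$ have degree strictly less than $n$, so by induction on degree $\xi\in\varphi(\Lambda V)$, and hence $w\in\varphi(\Lambda V)$.

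The next step, which I expect to be the main technical point, is to verify that this surjectivity respects the word-length filtration: $\varphi(\Lambda^{>m}V)=\Lambda^{>m}W$. The inclusion $\subseteq$ is immediate from $\varphi(V)\subset\Lambda^{\ge 1}W$; for the reverse one writes each generator $w_1\cdots w_{m+1}\in\Lambda^{>m}W$ as $\varphi(\xi_1\cdots\xi_{m+1})$, where each $\xi_i\in\Lambda^+V$ is chosen via the previous step (positive word length being forced by simple-connectedness, since each $w_i$ has positive degree). Regarding $(\Lambda W,d)$ as a $(\Lambda V,d)$-module via $\varphi$, this identification produces a natural cdga isomorphism
\[
\left(\frac{\Lambda V}{\Lambda^{>m}V},\bar d\right)\otimes_{(\Lambda V,d)}(\Lambda W,d)\cong\left(\frac{\Lambda W}{\Lambda^{>m}W},\bar d\right),
\]
under which $\rho_m^V\otimes_{(\Lambda V,d)}\id$ corresponds to $\rho_m^W$.

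To finish, I would set $m=\cat(X_0)$, invoke Theorem \ref{th:Hess} to obtain a $(\Lambda V,d)$-module homotopy retraction $r$ of $\rho_m^V$, and tensor $r$ (together with its accompanying $\Lambda V$-linear chain homotopy) with $(\Lambda W,d)$ over $(\Lambda V,d)$. The resulting map is a $(\Lambda W,d)$-module homotopy retraction of $\rho_m^W$, which yields $\mcat(Y)\le m$; a final application of Theorem \ref{th:Hess} then gives $\cat(Y_0)=\mcat(Y)\le m=\cat(X_0)$.
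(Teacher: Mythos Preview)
Your tensoring step has a genuine gap. The identification $(\Lambda V/\Lambda^{>m}V)\otimes_{\Lambda V}\Lambda W\cong\Lambda W/\Lambda^{>m}W$ is correct as an \emph{underived} tensor product, but the module homotopy retraction $r$ of $\rho_m^V$ is, by definition, a $\Lambda V$-linear retraction defined on a relative Sullivan model $(\Lambda V\otimes\Lambda Z,D)\we(\Lambda V/\Lambda^{>m}V,\bar d)$; applying $-\otimes_{\Lambda V}\Lambda W$ to that semi-free resolution computes the \emph{derived} tensor product. Since $\Lambda W=\Lambda V/\ker\varphi$ is a nontrivial quotient it is not flat over $\Lambda V$, so the square formed by $\rho_m^V$, $\rho_m^W$ and $\varphi$ is a strict pushout but \emph{not} a homotopy pushout: the algebra $(\Lambda W\otimes\Lambda Z,D')$ is in general not quasi-isomorphic to $\Lambda W/\Lambda^{>m}W$, and the retraction $r\otimes\id$ you produce is a retraction of the wrong cofibration, so it does not witness $\mcat(Y)\le m$. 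If instead you intend $r$ to be a strict $\Lambda V$-linear chain map out of the quotient itself with $r\circ\rho_m^V\sim\id$ via a $\Lambda V$-linear homotopy, such a map need not exist even when $\mcat(X)=m$: for $X=S^2$ and $m=1$, linearity over $\Lambda(a,x)$ forces $r([1])=1$ and hence $r([a])=a\cdot r([1])=a$, but then $0=r(a\cdot[a])=a\cdot r([a])=a^2\ne 0$.

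The paper's proof proceeds by a completely different mechanism. It pulls the based path fibration $p\colon PX\to X$ back along $f$ and models the resulting fibration $q\colon E\to Y$ via the acyclic closure $(\Lambda V\otimes\Lambda\hat V,D)$. Surjectivity of $\varphi$ makes the linear part $\overline D_0\colon\hat V\to W$ surjective, and the special feature $\overline D(\hat V)\subset\Lambda^+W\otimes\Lambda\hat V$ of the acyclic closure lets one project the model of $E$ onto $(\Lambda(\ker\overline D_0),0)$ by a genuine cdga quasi-isomorphism (checked on linear parts via Proposition~\ref{prop:SullivanAlgs}). Since this projection annihilates $\Lambda W$, the map $q_0$ is trivial, whence $\cat(Y_0)=\secat(q_0)\le\secat(p_0)=\cat(X_0)$, with no appeal to Theorem~\ref{th:Hess}.
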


\begin{proof}
	Let $(\Lambda V,d)$ and $(\Lambda W,d)$ be the minimal models of $X$ and $Y$ respectively. Then, by Theorem \ref{th:connectinTopol}, the hypothesis tells us that $f$ is modeled by a surjective cdga morphism $\varphi\colon(\Lambda V,d)\to(\Lambda W,d)$. Denote $p\colon PX\rightarrow X$ the based path fibration on $X$, so that $\cat(X)=\secat(p)$. We construct a model of the (homotopy) pullback
	
	\begin{equation*}
	\begin{tikzcd}
	E\ar[r]\ar[d, twoheadrightarrow, "q"']&PX\ar[d, twoheadrightarrow, "p"]\\
	Y\ar[r, "f"']&X
	\end{tikzcd}
	\end{equation*}

(using the acyclic closure of $(\Lambda V,d)$) as the homotopy pushout

\begin{equation*}
\begin{tikzcd}
&(\Lambda V,d)\ar[dl, "\epsilon"']\ar[d, hook]\ar[r, "\varphi"]&(\Lambda W,d)\ar[d, hook, "j"]\\
\mQ&(\Lambda V\otimes \Lambda \hat{V},D)\ar[l, "\simeq"]\ar[r]&(\Lambda W\otimes \Lambda \hat{V}, \overline D)
\end{tikzcd}
\end{equation*}
Here, $j$ is a cdga model for $q$ and $\overline{D}$ verifies $\overline{D}(\hat{V})\subset \Lambda^+W\otimes\Lambda \hat{V}$ and $\overline{D}_0\colon \hat{V}\rightarrow W$ is surjective. Now write $Z:=\ker \overline{D}_0$ and consider the projection $\pi\colon (\Lambda W\otimes \Lambda \hat{V}, \overline D)\rightarrow (\Lambda Z,0)$. Observe that $Q(\pi)\colon(W\oplus\hat{V},\overline{D}_0)\rightarrow (Z,0)$ is a quasi-isomorphism. By Proposition \ref{prop:SullivanAlgs}, $\pi$ is a cdga quasi-isomorphism. Since $\pi\circ j$ is trivial, we deduce that $q_0$ is trivial and thus $\cat(X_0)=\secat(q_0)$. The result follows since rationalization commutes with limits and $\secat(q_0)\le\secat(p_0)$.
\end{proof}	

Observe that in previous proof, the fact that $D(\hat{V})\subset\Lambda^+V\otimes\Lambda\hat{V}$ was crucial in assuring that $\pi$ is a cdga morphism.

\section{The Whitehead and Ganea characterizations}
We start describing the Whitehead and Ganea characterizations for the sectional category of a continuous map $f$ provided that our spaces are CW complexes. We will then translate them to the rational context in the category \textbf{cdga}.

\subsection{The Whitehead characterization}
Take $i\colon A\hookrightarrow X$ a cofibration replacement for $f$. We define the $m$-th \emph{fat wedge} of $i$ as \[T^m(i):=\left\{(x_0,\ldots,x_m)\in X^{m+1}\colon\ \  x_j\in i(A) \mbox{ for at least one } j \right\}\subset X^{m+1}.\] Then, $\secat(f)=\secat(i)$ is the least $m$ for which there exists the dashed map making the following a homotopy commutative diagram

\begin{center}
\begin{tikzcd}
&T^m(i)\arrow[d, hookrightarrow]\\
X\arrow[ur, dashed]\arrow[r, hookrightarrow, "\Delta_{m+1}"']&X^{m+1}.	
\end{tikzcd}
\end{center}
Here $\Delta_{m+1}$ denotes the diagonal map $\Delta_{m+1}(x)=(x,x,\ldots,x)$

\subsection{The Ganea characterization}\label{ssec:GaneaChar}
Take $p\colon E\fib B$ a fibration replacement for $f$. We define the $m$-th \emph{Ganea fibration} for $f$ as \[G_f^m\colon P^m(p)\fib B,\] where \[P^m(p):=\left\{\sum_{j=0}^mt_j e_j\colon\ \  p(e_0)=p(e_j), t_j\ge 0 \mbox{ and } \sum_{j=0}^mt_j=1\right\},\] for which summands of the form $0 e$ are dropped, 
and $G_f^m(t_0 e_0+t_1 e_1+\cdots + t_m e_m):=p(e_0)$. See \cite[Pg. 54]{Husemoller66} for a detailed description of the topology of $P^m(p)$. Then, $\secat(f)=\secat(p)$ is the smallest $m$ for which $G_f^m$ admits a section.

\subsection{Whitehead vs. Ganea}\label{ssec:WhiteVsGanea}
The way to prove the Ganea characterization is to glue the local sections of $p$ into a section of $G_p^m$ by means of a partition of the unity on $B$. Then, to prove the Whitehead characterization it is sufficient to see that there is a homotopy pullback\cite{Ma76,Fasso}

%\begin{center}
\begin{equation}\label{diag:WhiteVsGane}
	\begin{tikzcd}
		P^m(p)\arrow[r]\arrow[d, "G_p^m"']&T^m(i)\arrow[d, hookrightarrow]\\
		X\arrow[r, hookrightarrow, "\Delta_{m+1}"']&X^{m+1}.	
	\end{tikzcd}
\end{equation}	
%\end{center}

\subsection{First algebraic characterizations}\label{ssec:FirstAlgCharac}
Let $\varphi\colon (A,d)\fib (B,b)$ be a surjective cdga model for a cofibration replacement $i\colon A\cofib X$ of a continuous map $f$. Then by \cite[Thm. 1]{Felix09} the inclusion of the $m$-fat wedge $T^m(i)\hookrightarrow X^{m+1}$ is modeled by the quotient \[\pi\colon (A^{\otimes m+1},d)\fib \left(\frac{A^{\otimes m+1}}{(\ker\varphi)^{\otimes m+1}},\overline{d}\right) \] while $\Delta_{m+1}\colon X\cofib X^{m+1}$ is modeled by the multiplication $\mu\colon (A^{\otimes m+1},d)\rightarrow (A,d)$. Now choose a relative Sullivan model for $\pi$, \[j_m\colon (A^{\otimes m+1},d)\cofib (A^{\otimes m+1}\otimes\Lambda W,D),\] then Diagram \ref{diag:WhiteVsGane} is modeled by the following pushout

\begin{equation*}
\begin{tikzcd}
(A^{\otimes m+1},d)\arrow[r, hookrightarrow, "j_m"]\arrow[d, "\mu"']& (A^{\otimes m+1}\otimes\Lambda W,D)\arrow[d, "\overline{\mu}"]\\
(A,d)\arrow[r, hookrightarrow, "i_m"']&(A\otimes\Lambda W,\overline{D}).
\end{tikzcd}
\end{equation*}
Here, $i_m$ is a relative Sullivan model for $G_f^m$. We have that $\secat(f_0)$ is the smallest $m$ for which one of the following equivalent conditions hold

\begin{itemize}
	\item \textbf{Whitehead: } There exists a cdga morphism $r\colon (A^{\otimes m+1}\otimes\Lambda W,D)\rightarrow A$ such that $r\circ j_m=\mu$.
	\item \textbf{Ganea: } There exists a cdga retraction for $i_m$.
\end{itemize}

\section{Rational approximations of sectional category}
One can now impose less restrictive conditions to the existence of morphisms in the characterizations of Section \ref{ssec:FirstAlgCharac} to get algebraic lower bounds for sectional category.\\

Let $f$ be a continuous map and $i_m\colon (A,d)\cofib(A\otimes \Lambda W,d)$ be a relative Sullivan model for $G_f^m$ (for example, the one constructed in Section \ref{ssec:FirstAlgCharac}). If $r\colon (A\otimes \Lambda W,D)\rightarrow (A,d)$ is a retraction of $i_m$ (that is $r(a\otimes 1)=a$, for $a\in A$) then we have the following implications:
$r$ is a cdga morphism $\Rightarrow$ $r$ is an $(A,d)$-module morphism $\Rightarrow$ $\Ho(i)$ is injective.

\begin{definition}
	With previous notation,
	\begin{itemize}
		\item the \emph{module sectional category} of $f$, $\msecat(f)$, is the least $m$ for which $i_m$ admits an $(A,d)$-module retraction.
		
		\item the \emph{homology sectional category} of $f$, $\hsecat(f)$, is the least $m$ for which $\Ho(i_m)=\Ho^*(G_f^m,\mQ)$ is injective. 
	\end{itemize}
\end{definition}

In previous definition one can replace $i_m$ by any model for $G_p^m$ and asking the retractions to be homotopy retractions. We can now deduce

\begin{proposition}
	If $f$ is a continuous map, then \[\nil \ker \Ho^*(f,\mQ)\le \hsecat(f)\le\msecat(f)\le\secat(f_0)\le \secat(f).\]
\end{proposition}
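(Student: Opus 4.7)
The plan is to establish each of the four inequalities separately, dispatching the three rightmost ones first as formal consequences of the definitions, and saving the nilpotency bound as the substantive step. For $\secat(f_0) \le \secat(f)$, I would use that rationalization is a homotopy functor commuting with homotopy pullbacks, and hence with the Ganea construction: $(G_f^m)_0 \simeq G_{f_0}^m$, so any section of $G_f^m$ rationalizes to a section of $G_{f_0}^m$.

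Next, for $\msecat(f) \le \secat(f_0)$, I would invoke the cdga Ganea characterization from Section \ref{ssec:FirstAlgCharac}: $\secat(f_0) \le m$ is equivalent to $i_m$ admitting a cdga retraction $r$. Any such $r$ is a cdga morphism satisfying $r \circ i_m = \id_A$, so for $a \in A$ and $\xi \in A \otimes \Lambda W$ one automatically has $r(a\xi) = r(a) r(\xi) = a \cdot r(\xi)$, meaning $r$ is also an $(A,d)$-module retraction. For $\hsecat(f) \le \msecat(f)$, any $(A,d)$-module retraction of $i_m$ descends, upon passing to cohomology, to a left inverse of $\Ho(i_m)$, which is therefore injective.

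The substantive step is $\nil \ker \Ho^*(f,\mQ) \le \hsecat(f)$. Assume $\hsecat(f) = m$ and pick classes $[a_0],\ldots,[a_m] \in \ker \Ho^*(\varphi)$, where $\varphi \colon (A,d) \fib (B,d)$ is the surjective cdga model for $f$ from Section \ref{ssec:FirstAlgCharac}. For each $i$, $\varphi(a_i)$ is a boundary $d b_i \in B$; using surjectivity of $\varphi$, I would lift $b_i$ to $c_i \in A$ and replace $a_i$ by the cohomologous $a_i - d c_i$, so that $a_i \in \ker \varphi$ for every $i$. Then $a_0 \otimes \cdots \otimes a_m \in (\ker \varphi)^{\otimes m+1}$ is annihilated by the projection $\pi \colon A^{\otimes m+1} \fib A^{\otimes m+1}/(\ker \varphi)^{\otimes m+1}$; since $j_m$ is a relative Sullivan model for $\pi$, the class $[j_m(a_0 \otimes \cdots \otimes a_m)]$ vanishes in $\Ho(A^{\otimes m+1} \otimes \Lambda W, D)$. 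Applying $\overline{\mu}$ and using the identity $\overline{\mu} \circ j_m = i_m \circ \mu$ coming from the pushout square, one obtains $[i_m(a_0 \cdots a_m)] = 0$ in $\Ho(A \otimes \Lambda W, \overline{D})$, and injectivity of $\Ho(i_m)$ finally forces $[a_0 \cdots a_m] = 0$ in $\Ho^*(X, \mQ)$.

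The hard part will be the nilpotency step: one must carefully replace each $a_i$ by a representative in $\ker \varphi$ (which crucially uses surjectivity of $\varphi$) and then trace through the pushout square of Section \ref{ssec:FirstAlgCharac} to conclude that the product is killed in $\Ho(A \otimes \Lambda W, \overline{D})$. Once this is set up, the cycle is essentially the same as in the proof of $\nil \Ho^+(X,\mQ) \le e(X)$ in Corollary \ref{coro:inequalitiesCat}. The remaining three inequalities cost almost nothing beyond unwinding the definitions of $\msecat$ and $\hsecat$ together with the cdga Ganea characterization of $\secat(f_0)$.
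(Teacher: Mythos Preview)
Your argument is correct. The paper itself offers no proof of this proposition beyond the phrase ``We can now deduce'', treating all four inequalities as immediate from the definitions and from the cdga Ganea characterization of $\secat(f_0)$ in Section~\ref{ssec:FirstAlgCharac}; you have supplied the details the paper omits.

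A brief comparison with the only analogous argument the paper does spell out, namely the proof of $\nil\Ho^+(X,\mQ)\le e(X)$ in Corollary~\ref{coro:inequalitiesCat}: there the word-length filtration on a Sullivan model makes the nilpotency bound transparent, since a product of $m+1$ positive-degree classes automatically lands in $\Lambda^{>m}V=\ker\rho_m$. For a general map $f$ no such filtration is available on the Ganea model $(A\otimes\Lambda W,\overline D)$, so your detour through the Whitehead side---replacing each $a_i$ by a cohomologous cycle in $\ker\varphi$, observing that $a_0\otimes\cdots\otimes a_m$ dies under $\pi$, and then transporting this across the pushout square via $\overline\mu\circ j_m=i_m\circ\mu$---is exactly the right substitute. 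The three rightmost inequalities are, as you say, formal.
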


Observe that, if we take $f$ as the inclusion of the base point into a based space $X$, we get
\[\nil\ \Ho^+(X,\mQ)\le e(X) \le\mcat(X)\le\cat(X_0)\le \cat(X).\]

In contrast with Theorem \ref{th:Hess}, there are maps $f$ for which $\msecat(f)\less \secat(f_0)$:

\begin{example}[\cite{St00}]\label{example:Stanley} Consider the cdga $(\Lambda (a_2,b_2,x_3), dx=a^2+b^2)$, then the inclusion \[\varphi\colon(\Lambda (a),0)\cofib (\Lambda (a,b,x), dx=a^2+b^2)\] admits a retraction as $(\Lambda (a),0)$-module but not as cdga. In fact, if $r$ is a cdga retraction then it must verify $r(a)=a$, $r(b)=\alpha a$, $r(x)=0$, but then $r$ cannot commute with differentials since $r(dx)=r(a^2+b^2)=(1+\alpha^2)a^2\ne 0$. Let us now define an $(A,d)$-module retraction as $r(b^ix)=0$ and $r(b^i)=0$ for $i$ odd and $r(b^i)=(-1)^{i/2}a^i$ for $i$ even.
\end{example}

Since we are particularly interested in topological complexity, by taking $f$ as the diagonal map, $\Delta_X^n$, we introduce

\begin{definition} For a space $X$,
	\begin{itemize}
		\item the \emph{module topological complexity} of $X$ is $\mtc_n(X):=\msecat(\Delta_X^n)$,
		\item the \emph{homology topological complexity} is $\htc_n(X):=\hsecat(\Delta_X^n)$.
	\end{itemize}
\end{definition}

Since $(\Delta_X^n)_0=\Delta_{X_0}^n$, we have \[\nil \ker \Ho^*(\Delta_X^n,\mQ)\le \htc_n(X)\le \mtc_n(X)\le \tc_n(X_0)\le \tc_n(X).\]

\subsection{Module sectional category} Most of this section is based on \cite{FGKV06,Carrasquel14b,Carrasquel15b}. Let $(A,d)$ be a cdga. An $(A,d)$-module is a chain complex $(M,d)$ together with an action of $A$, $A\otimes M\rightarrow M$ verifying $d(am)=d(a)m + (-1)^{|a|}ad(m)$. A \emph{semi-free} extension of an $(A,d)$-module $(M,d)$ is an $(A,d)$ module of the form \[(M\oplus(A\otimes X),d),\] where $X=\bigoplus_{i\ge 0}X_i$, $d(X_0)\subset M$, $d(X_k) \subset M\oplus(A\otimes(\bigoplus_{i=0}^{k-1}X_i)$ and the inclusion $(M,d)\rightarrow (M\oplus(A\otimes X),d)$ is an $(A,d)$-module morphism. Observe that relative Sullivan algebras $(A\otimes\Lambda V,D)$ are semi-free extensions of $(A,d)$\cite[Lemma 14.1]{Bible}. In fact, we have that the category of $(A,d)$-modules is a closed proper model category \cite[Theorem 4.1]{FGKV06}.\\

Any cdga morphism $\varphi\colon(A,d)\rightarrow (B,d)$ can be seen as an $(A,d)$-module morphism by taking in $(B,d)$ the $(A,d)$-module structure $ab=\varphi(a)b$. A \emph{semi-free model} for $\varphi$ is an $(A,d)$-module quasi-isomorphism $(A\oplus(A\otimes X),d)\we (B,d)$ from a $(A,d)$-semi-free extension of $(A,d)$. A \emph{semi-free} model for a continuous map $f$ is just a semi-free model for any cdga model of $f$.

\begin{theorem}[\cite{FGKV06}]
	If $(A,d)\cofib(A\oplus(A\otimes X),d)$ is a semi-free model for a map $f$ then a semi-free model for the $m$-th Ganea fibration $G_f^m$ is \[j_m\colon(A,d)\cofib \left(A\oplus (A\otimes s^{-m}X^{\otimes m+1}),d\right)\]
\end{theorem}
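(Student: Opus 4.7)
The plan is to construct the semi-free model of $G_f^m$ by induction on $m$, using the recursive fiberwise-join description of Ganea spaces and then invoking the closed model structure on $(A,d)$-modules of \cite[Theorem 4.1]{FGKV06}. The key point is that, under this model structure, any two semi-free models of the same map are $(A,d)$-module homotopy equivalent, so it suffices to exhibit one semi-free model with the claimed added generators.

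The base case $m=0$ is immediate since $G_f^0=f$, whose semi-free model is $(A,d)\hookrightarrow(A\oplus A\otimes X,d)$ by hypothesis, matching $s^{0}X^{\otimes 1}=X$. For the inductive step, I would use the standard fact that $P^m(p)$ is the fiberwise join $P^{m-1}(p)\ast_B E$, where $p$ is the chosen fibration replacement of $f$; on fibers this gives $F^{\ast m}\ast F=F^{\ast(m+1)}$, homotopy equivalent to $\Sigma^m F^{\wedge(m+1)}$. The algebraic counterpart of this fiberwise join in the category of $(A,d)$-modules is the operation $Z\mapsto s^{-1}(Z\otimes X)$ on semi-free generators, reflecting the K\"unneth-type identity $\tilde H^{\ast}(Y\ast Z)\cong s(\tilde H^{\ast}(Y)\otimes \tilde H^{\ast}(Z))$ together with the suspension implicit in the join. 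Applying this to the inductive hypothesis $Y_{m-1}=s^{-(m-1)}X^{\otimes m}$ yields $Y_m=s^{-1}(Y_{m-1}\otimes X)=s^{-m}X^{\otimes m+1}$, as claimed.

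An equivalent route is to use the pushout description of $G_f^m$ recalled in Section \ref{ssec:FirstAlgCharac} directly. Since pushing out along the multiplication $\mu\colon(A^{\otimes m+1},d)\to(A,d)$ is base change, which preserves semi-freeness and added generators, it suffices to exhibit a semi-free $(A^{\otimes m+1},d)$-module model of the fat wedge inclusion $\pi$ with added generators $s^{-m}X^{\otimes m+1}$; this can be produced by iterating the same suspension-tensor construction on $m+1$ copies of the semi-free model of $f$. The main obstacle in either approach is the careful bookkeeping of the suspension-tensor rule together with the induced differential: one must verify that the homotopy pushout modeling the fiberwise join, when replaced by a semi-free extension, gives precisely the generators $s^{-m}X^{\otimes m+1}$ with a differential compatible with the base-change along $\mu$. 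Uniqueness of semi-free models in the module homotopy category then ensures that this construction coincides up to homotopy with the one obtained by base-changing the relative Sullivan model of $\pi$.
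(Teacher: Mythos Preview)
The paper does not contain a proof of this theorem; it is quoted as a result from \cite{FGKV06}, and only the explicit formula for the differential is recorded immediately afterwards. So there is no in-paper proof to compare against.

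That said, your plan is essentially the approach of the cited reference, whose very title is \emph{Joins of DGA modules and sectional category}: one sets up an algebraic join of semi-free $(A,d)$-modules mirroring the fiberwise join $P^m(p)=P^{m-1}(p)\ast_B E$, and proceeds by induction on $m$. Your alternative route via base change along $\mu$ from a semi-free $(A^{\otimes m+1},d)$-model of the fat-wedge inclusion is also legitimate and leads to the same place. One cosmetic slip: with the paper's convention $\hat V^k=V^{k+1}$ the operator $s$ lowers degree, so your K\"unneth-type identity for the join should read $s^{-1}$ rather than $s$ on the right-hand side; this is harmless and you use the correct shift $s^{-m}$ elsewhere. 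The substantive gap in your outline is that you do not actually produce or verify the differential on $A\oplus A\otimes s^{-m}X^{\otimes m+1}$: you correctly flag this as the main obstacle, but the content of the theorem is precisely that the explicit formula displayed after the statement defines a differential and yields a semi-free model of $G_f^m$. Filling this in requires checking that the module-theoretic join of $m{+}1$ copies of $(A\oplus A\otimes X,d)$ over $(A,d)$ is quasi-isomorphic to the relative Sullivan model $i_m$ of Section~\ref{ssec:FirstAlgCharac}, which is the technical heart of \cite{FGKV06}.
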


Here $d=d_0+d_+$ (in $A \oplus A\otimes s^{-m}X^{\otimes m+1}$) is given by 
\begin{eqnarray*}
	\lefteqn{d(s^{-m}x_0\otimes \cdots \otimes x_m) = (-1)^{\sum \limits_{k=1}^{m}(k|x_{m-k}| + k -1)} d_0x_0\cdot \cdots \cdot d_0x_m}\\
	&+ & \sum \limits _{i=0}^{m} \sum \limits_{j_i}(-1)^{(|a_{ij_i}| +1)(|x_0| + \cdots +|x_{i-1}| + m)}a_{ij_i}\otimes s^{-m}x_0 \otimes \cdots \otimes x_{ij_i}\otimes  \cdots \otimes x_m,
\end{eqnarray*}
for $x_0$,..., $x_m\in X$ and $d_+x_i = \sum \limits_{j_i}a_{ij_i}\otimes x_{ij_i}$ with $a_{ij_i}\in A$ and $x_{ij_i}\in X$.

\begin{corollary}
	If $f$ is a map, then $\msecat(f)$ is the smallest $m$ for which $j_m$ admits an $(A,d)$-module retraction and $\hsecat(f)$ is the smallest $m$ for which $\Ho(j_m)$ is injective.
\end{corollary}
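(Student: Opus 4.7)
The plan is to reduce the corollary to the definitions of $\msecat(f)$ and $\hsecat(f)$ by invoking the model-independence already noted after those definitions, combined with the theorem that identifies $j_m$ as a semi-free model for $G_f^m$. The whole argument amounts to verifying that any two $(A,d)$-semi-free models of the same map behave identically with respect to the existence of an $(A,d)$-module retraction and with respect to injectivity in homology.

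First, I would place both the relative Sullivan model $i_m\colon (A,d)\cofib (A\otimes\Lambda W,\overline{D})$ of $G_f^m$ from Section \ref{ssec:FirstAlgCharac} and the semi-free model $j_m\colon (A,d)\cofib (A\oplus(A\otimes s^{-m}X^{\otimes m+1}),d)$ from the preceding theorem inside the closed proper model category of $(A,d)$-modules \cite[Theorem 4.1]{FGKV06}. Since relative Sullivan algebras are semi-free extensions over $(A,d)$, both $i_m$ and $j_m$ are cofibrations in this model structure, and both have codomain quasi-isomorphic, over $(A,d)$, to any cdga model $(C,d)$ of the total space of $G_f^m$. By the standard lifting lemma in a closed model category, there exists an $(A,d)$-module quasi-isomorphism $\alpha\colon (A\oplus(A\otimes s^{-m}X^{\otimes m+1}),d)\we (A\otimes\Lambda W,\overline{D})$ such that $\alpha\circ j_m=i_m$.

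Next, I would use $\alpha$ to transport retractions. If $r\colon (A\otimes\Lambda W,\overline{D})\to (A,d)$ is an $(A,d)$-module retraction of $i_m$, then $r\circ\alpha$ is an $(A,d)$-module retraction of $j_m$. Conversely, given an $(A,d)$-module retraction $r'$ of $j_m$, the lifting lemma produces an $(A,d)$-module homotopy retraction of $i_m$: lift the map $r'$ along the trivial fibration from a path object, or equivalently use that $\alpha$ admits an $(A,d)$-module homotopy inverse $\beta$ with $\beta\circ\alpha\simeq\id$ and define $r:=r'\circ\beta$. Either way, $i_m$ admits an $(A,d)$-module (homotopy) retraction iff $j_m$ does, so the minimal such $m$ is $\msecat(f)$.

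Finally, the homology statement is immediate: since $\alpha$ is a quasi-isomorphism and $\alpha\circ j_m=i_m$, the map $\Ho(j_m)$ is injective if and only if $\Ho(i_m)$ is, hence the minimal $m$ for which $\Ho(j_m)$ is injective equals $\hsecat(f)$. The main obstacle is purely bookkeeping at the first step, namely making the comparison $\alpha$ explicit enough to justify transporting $(A,d)$-module retractions in both directions; once the cofibrant–cofibrant replacement principle in $(A,d)$-\textbf{Mod} is invoked, the rest is formal.
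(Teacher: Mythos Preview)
Your proposal is correct and follows exactly the approach the paper has in mind. The paper does not give an explicit proof of this corollary: it simply records, right after the definition of $\msecat$ and $\hsecat$, that ``one can replace $i_m$ by any model for $G_f^m$ and asking the retractions to be homotopy retractions,'' and then states the corollary immediately after the theorem identifying $j_m$ as a semi-free model of $G_f^m$. Your write-up supplies the details behind that one-line remark---constructing the comparison quasi-isomorphism $\alpha$ between the two cofibrant $(A,d)$-module models and transporting (homotopy) retractions along it---which is precisely the standard model-categorical justification the paper leaves implicit.
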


\subsection{Poincar\'e Duality}\label{ssec:PoincareDuality}

A finite dimensional commutative graded algebra $H$ is said to be a \emph{Poincar\'e duality algebra with formal dimension $n$} when $H^0=\mQ$, $H=\bigoplus_{i=o}^n H^i$ and there exists an element $\Omega\in H^n$ such that the map of degree $-n$

\begin{center}
	\begin{tikzcd}[row sep=tiny]
		H\arrow[r, "\Phi"]&\hom(H,\mQ)\\
		a\arrow[r, mapsto]&b\mapsto \Omega^{\#}(ab)\\
	\end{tikzcd}
\end{center}

is an isomorphism, where $\Omega^\#$ denotes the dual of $\Omega$.

\begin{theorem}[\cite{Carrasquel14b}]
	Let $\varphi\colon (A,d)\to (B,d)$ be a cdga morphism with $H(A,d)$ a Poincar\'e duality algebra, then $\Ho(\varphi)$ is injective if and only if $\varphi$ admits a homotopy retraction as a morphism of $(A,d)$-modules.
\end{theorem}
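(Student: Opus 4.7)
The ``only if'' direction is immediate and needs no Poincar\'e duality: if $r \colon (B,d) \to (A,d)$ is a module homotopy retraction, then $\Ho(r) \circ \Ho(\varphi) = \id$, so $\Ho(\varphi)$ is injective.

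For the converse, I would proceed in three steps. First, replace $\varphi$ by a semi-free $(A,d)$-module model $j \colon (A,d) \cofib (A \oplus (A \otimes X), d) =: P$. Since semi-free extensions are cofibrant in the projective model structure on $(A,d)$-modules, producing a module homotopy retraction of $\varphi$ reduces to producing a strict module retraction of $j$. The space of candidate retractions is parametrised by the cochain complex $\Hom_{(A,d)}(P,A)$, and the problem becomes showing that the restriction-to-$A$ morphism
\[
j^* \colon \Hom_{(A,d)}(P,A) \to \Hom_{(A,d)}(A,A) = (A,d)
\]
is surjective on $\Ho^0$, i.e., hits the class $[1]$.

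Second, I would use Poincar\'e duality on $\Ho(A,d)$ to pair the mapping complex with $P$ itself. The fundamental class functional $\Omega^{\#} \colon \Ho^n(A) \to \mQ$ together with the $A$-action on $P$ induces a non-degenerate pairing
\[
\Ho^*(\Hom_{(A,d)}(P,A)) \otimes \Ho^{n-*}(P) \to \mQ, \qquad ([r],[b]) \mapsto \Omega^{\#}([r(b)]),
\]
under which $j^*$ is adjoint (up to the shift by $n$) to $\Ho(\varphi) \colon \Ho^*(A) \to \Ho^*(B) \cong \Ho^*(P)$. Since $\Ho(\varphi)$ is injective by hypothesis, its transpose with respect to this pairing is surjective, so the class of $1 \in \Ho^0(A)$ (which corresponds, via $\Omega^{\#}$ and Poincar\'e duality, to the class of $\Omega \in \Ho^n(A) \subset \Ho^n(P)$) lies in the image of $\Ho^0(j^*)$. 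Unwinding the pairing yields a $0$-cocycle $r \in \Hom_{(A,d)}(P,A)$ whose restriction to $A$ is cohomologous to $1$, and the standard lifting property for semi-free modules promotes $r$ to an honest retraction, producing the desired module homotopy retraction of $\varphi$.

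The main obstacle is the second step: constructing the Poincar\'e duality pairing at the level of the module mapping complex. In general $(A,d)$ itself need not admit a non-degenerate pairing on the nose---only $\Ho(A,d)$ does---so the pairing must be set up at the derived level, and one must check that the Poincar\'e duality structure on $\Ho(A,d)$ is enough to make $(A,d)$ a dualising module for $P$ in the sense needed here. Once this is established, the rest of the argument reduces to the elementary linear algebra of injective maps between dual vector spaces.
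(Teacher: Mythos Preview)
The survey does not prove this theorem; it is quoted from \cite{Carrasquel14b} without argument, so there is no in-paper proof to compare against. I can only assess your outline on its own merits.

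Your strategy is correct and is the standard route to this result. The reduction to producing a $0$-cocycle in $\Hom_{(A,d)}(P,A)$ restricting to $1\in A$ is right, and duality is indeed the mechanism that produces it. The obstacle you flag dissolves under the paper's standing finite-type hypothesis: choose a cocycle $\varepsilon\in\Hom(A,\mQ)$ of degree $-n$ representing $\Omega^{\#}$, and form the $(A,d)$-module map $\Phi\colon A\to\Hom(A,\mQ)$, $\Phi(a)(b)=\varepsilon(ab)$. On homology $\Phi$ is exactly the Poincar\'e duality isomorphism, so $\Phi$ is a quasi-isomorphism of $(A,d)$-modules. Since $P$ is semi-free, $\Hom_A(P,-)$ preserves this quasi-isomorphism, and tensor--hom adjunction then gives
\[
\Hom_A(P,A)\ \we\ \Hom_A\bigl(P,\Hom(A,\mQ)\bigr)\ \cong\ \Hom(P,\mQ).
\]
This \emph{is} your non-degenerate pairing, made explicit. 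Under it, $j^*$ in degree $0$ becomes the linear dual of $\Ho^n(j)=\Ho^n(\varphi)\colon\Ho^n(A)\to\Ho^n(P)$, which is injective by hypothesis; hence $j^*$ surjects onto $\Ho^0(A)=\mQ$. Any preimage of $1$ is already a strict $(A,d)$-module retraction of $j$: since $A^0=\mQ$ contains no boundaries, $[r(1)]=[1]$ forces $r(1)=1$ on the nose, so the extra lifting step in your last sentence is unnecessary. Once $\Phi$ is written down, the remainder is exactly the elementary linear algebra you describe.
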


\begin{corollary}
	If $f\colon X\rightarrow Y$ is a map with $\Ho^*(Y,\mQ)$ a Poincar\'e duality algebra, then $\msecat(f)=\hsecat(f)$. In particular, if $X$ is a Poincar\'e duality complex, then $\mtc_n(X)=\htc_n(X)$.
\end{corollary}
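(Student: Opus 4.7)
The plan is to apply the preceding theorem \emph{levelwise} to each Ganea model $i_m$. First, choose a cdga model $\varphi\colon (A,d)\to (B,d)$ for $f$ as in Section \ref{ssec:FirstAlgCharac}, so that $(A,d)$ models the target $Y$; thus $\Ho(A,d)\cong \Ho^*(Y,\mQ)$ is a Poincar\'e duality algebra by hypothesis. The Ganea construction then produces, for each $m\ge 0$, a relative Sullivan model $i_m\colon (A,d)\cofib (A\otimes\Lambda W,\overline{D})$ of $G_f^m$, whose source is still $(A,d)$.

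Next, I would recall the two characterizations: $\msecat(f)$ is the least $m$ for which $i_m$ admits an $(A,d)$-module (homotopy) retraction, while $\hsecat(f)$ is the least $m$ for which $\Ho(i_m)$ is injective. The inequality $\hsecat(f)\le \msecat(f)$ is automatic, since any module retraction induces a retraction on cohomology. For the reverse, set $m=\hsecat(f)$, so that $\Ho(i_m)$ is injective. Since $\Ho(A,d)$ is Poincar\'e duality, applying the preceding theorem to $\varphi=i_m$ yields an $(A,d)$-module homotopy retraction of $i_m$, hence $\msecat(f)\le m$. Combined with the automatic inequality, this gives the first statement.

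For the second assertion, apply the first to $f=\Delta_X^n\colon X\to X^n$. The target has rational cohomology $\Ho^*(X^n,\mQ)\cong \Ho^*(X,\mQ)^{\otimes n}$, and a tensor product of Poincar\'e duality algebras is again Poincar\'e duality, with fundamental class the tensor product of the factors' fundamental classes and formal dimension the sum of the formal dimensions. Hence the hypothesis of the first part holds and $\mtc_n(X)=\htc_n(X)$.

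All the nontrivial content sits in the preceding theorem; the only point to check here is that the Poincar\'e duality hypothesis on $\Ho^*(Y,\mQ)$ transfers to the source of each Ganea model $i_m$, which it does trivially because the source is $(A,d)$ itself in every case. The second assertion further reduces to the standard observation that tensor products preserve Poincar\'e duality, so no genuine obstacle arises beyond invoking the theorem.
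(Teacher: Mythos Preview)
Your argument is correct and is precisely the intended deduction: the paper states the corollary without proof because it follows immediately from the preceding theorem applied to each $i_m$, exactly as you do. The only observations needed are that the source of every $i_m$ is $(A,d)$ with $\Ho(A,d)\cong\Ho^*(Y,\mQ)$, and that $\Ho^*(X,\mQ)^{\otimes n}$ inherits Poincar\'e duality, both of which you supply.
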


\section{Characterization \`a la F\'elix-Halperin}
Let $f$ be a continuous map and $\varphi\colon(A,d)\fib(B,d)$ a surjective cdga model for $f$. Recall the notation from  Section \ref{ssec:FirstAlgCharac}. Then, since $\mu((\ker\varphi)^{\otimes m+1})\subset (\ker \varphi)^{m+1}$, we have a diagram
\begin{equation*}
\begin{tikzcd}
	&\left(\frac{A^{\otimes m+1}}{(\ker \varphi)^{\otimes m+1}},\overline{d}\right)\ar[dddr, bend left]\\
	(A^{\otimes m+1},d)\arrow[ur, twoheadrightarrow, "\pi"]\arrow[r, hookrightarrow, "j"]\arrow[d, "\mu"']& (A^{\otimes m+1}\otimes\Lambda W,D)\arrow[u, "\simeq"]\arrow[d, "\overline{\mu}"]\\
	(A,d)\ar[drr, bend right, "\rho_m"']\arrow[r, hookrightarrow, "i"']&(A\otimes\Lambda W,\overline{D})\ar[dr, dashed, "\tau"]\\
	&&\left(\frac{A}{(\ker\varphi)^{m+1}},\overline{d}\right).
\end{tikzcd}
\end{equation*}
Then taking $(A\otimes \Lambda Z,D)$ a relative Sullivan model for $\rho_m$, the lifting lemma gives a diagram

\begin{equation*}
\begin{tikzcd}
A\arrow[r, hook]\arrow[d, hook, "i"]&(A\otimes\Lambda Z, D)\arrow[d, twoheadrightarrow, "\simeq"]\\
(A\otimes\Lambda W,\overline{D})\ar[ur, "\overline{\tau}"]\ar[r, "\tau"']&\frac{A}{(\ker\varphi)^{m+1}}
\end{tikzcd}
\end{equation*}
which implies

\begin{proposition}[\cite{Jessup12,Carrasquel10}]\label{prop:upperBoudnSecat}
	If $\rho_m$ admits a cdga homotopy retraction then $\secat(f_0)\le m$. If $\rho_m$ admits an $(A,d)$-module homotopy retraction, then $\msecat(f)\le m$. If $\Ho(\rho_m)$ is injective, then $\hsecat(f)\le m$.
\end{proposition}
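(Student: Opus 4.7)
The plan is to use the diagram constructed immediately before the statement, which factors the inclusion $\iota\colon(A,d)\hookrightarrow (A\otimes\Lambda Z,D)$ (a relative Sullivan model for $\rho_m$) as $\overline{\tau}\circ i$, where $i\colon(A,d)\hookrightarrow(A\otimes\Lambda W,\overline{D})$ is the relative Sullivan model of the $m$-th Ganea fibration $G_f^m$ extracted from Section \ref{ssec:FirstAlgCharac}. By the algebraic characterizations of Section \ref{ssec:FirstAlgCharac} together with the definitions of $\msecat$ and $\hsecat$, the three conclusions $\secat(f_0)\le m$, $\msecat(f)\le m$ and $\hsecat(f)\le m$ amount respectively to $i$ admitting a cdga retraction, $i$ admitting an $(A,d)$-module retraction, and $\Ho(i)$ being injective. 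The whole proof therefore reduces to transferring these properties from $\iota$ to $i$ along $\overline{\tau}$.

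For the first two statements, I would suppose that $\rho_m$ admits a cdga (resp.\ $(A,d)$-module) homotopy retraction, that is, a cdga (resp.\ module) morphism $s\colon(A\otimes\Lambda Z,D)\to(A,d)$ with $s\circ\iota=\id_A$. Then $s\circ\overline{\tau}\colon(A\otimes\Lambda W,\overline{D})\to(A,d)$ is of the same type and satisfies
\[
(s\circ\overline{\tau})\circ i \;=\; s\circ(\overline{\tau}\circ i) \;=\; s\circ\iota \;=\; \id_A,
\]
so it is a retraction of $i$ of the required kind, and the Ganea characterization of Section \ref{ssec:FirstAlgCharac} yields the desired inequality.

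For the third statement, applying cohomology to $\overline{\tau}\circ i=\iota$ gives $\Ho(\overline{\tau})\circ\Ho(i)=\Ho(\iota)$. The quotient $(A\otimes\Lambda Z,D)\twoheadrightarrow \bigl(A/(\ker\varphi)^{m+1},\overline{d}\bigr)$ is a quasi-isomorphism by construction and its composition with $\iota$ is $\rho_m$, so injectivity of $\Ho(\rho_m)$ is equivalent to injectivity of $\Ho(\iota)$, which in turn forces injectivity of $\Ho(i)$.

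There is no serious obstacle here: each of the three implications is essentially a one-line diagram chase once $\overline{\tau}$ is available. The only point that requires care is that $\overline{\tau}$ be arranged to make the triangle $\overline{\tau}\circ i=\iota$ \emph{strictly} commutative, which is exactly what the lifting lemma (applied to the relative Sullivan cofibration $i$ against the surjective quasi-isomorphism out of $(A\otimes\Lambda Z,D)$) produces; without this strict commutativity the composite $s\circ\overline{\tau}$ would only be a retraction of $i$ up to chain homotopy, and recovering a genuine retraction in the cdga and module settings would demand extra work via homotopy lifting.
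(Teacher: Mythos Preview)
Your proposal is correct and is exactly the argument the paper intends: the diagram with $\overline{\tau}$ is constructed in the text preceding the proposition, and the paper simply writes ``which implies'' before stating the result, leaving to the reader the one-line transfers you spell out. Your care about strict commutativity of $\overline{\tau}\circ i=\iota$ (guaranteed by the lifting lemma) is well placed, and the only tacit point you leave implicit---that the cdga lift $\overline{\tau}$ is automatically an $(A,d)$-module map because $\overline{\tau}\circ i=\iota$---is immediate.
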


Observe that the opposite implication need not hold (see Example \ref{example:FelixHalpNeedSullivan}) and this remains true even for topological complexity:

\begin{example}
	Let us compute the topological complexity of $S^3_0$. The path fibration $\pi_2$ is modeled by the multiplication morphism $\mu_2\colon(\Lambda(a_1,a_2),0)\rightarrow (\Lambda a,0)$ with $|a_1|=|a_2|=|a|=3$. We remark that $\ker \Ho(\mu_2)=([a_1]-[a_2])$, therefore $\nil \ker \Ho(\mu_2)=1\le \tc(S^3_0)$. Now, since $\ker\mu_2=(a_1-a_2)$, we have that $(\ker\mu_2)^2=0$ and $\rho_1=\id$ admits a (homotopy) retraction. This proves, by Proposition \ref{prop:upperBoudnSecat}, that $\tc(S^3_0)=1$. Another way to show this is through $\tc(S^3_0)\le\tc(S^3)=1$.\\
	
	Now consider another cdga model for $S^3$, namely the cdga $(A,d):=(\Lambda (a,b,c),d)$ with $|a|=|b|=|c|=1$, with $d(a)=bc$, $d(b)=ac$ and $d(c)=ab$. Observe that $A$ is a free cga but that $(A,d)$ is \emph{not} a Sullivan algebra. Observe also that $(A,d)$ is a cdga model for $S^3$ as there is a quasi-isomorphism $(\Lambda v_3,0)\we (A,d)$ defined as $v\mapsto abc$.\\
	
	We have then that another cdga model for the path fibration $\pi_2$ is the multiplication on $(A,d)$, $\mu_2\colon(\Lambda (a_1,b_1,c_1,a_2,b_2,c_2),d)=(\Lambda(a,b,c),d)$. We now see that \[\rho_2\colon (\Lambda(a_1,b_1,c_1,a_2,b_2,c_2),d)\rightarrow \left(\frac{\Lambda(a_1,b_1,c_1,a_2,b_2,c_2)}{(\ker \mu_2)^3},d\right)\] is not homology injective. In fact, the element $\omega:=(a_1-a_2)(b_1-b_2)(c_1-c_2)\in(\ker\mu_2)^3$ can be written as $\omega=a_1b_1c_1-a_2b_2c_2-d(a_1a_2-b_1b_2+c_1c_2)$. This means that $[\omega]\ne 0$ and that $\Ho(\rho_2)([\omega])=0$.
\end{example}

If $I$ is an ideal of $A$, define the \emph{homology nilpotency} of $I$, $\hnil\ I$, to be the smallest $m$ such that $I^{m+1}$ is contained in an acyclic ideal of $A$, that is, a differential ideal $J$, with $\Ho(J)=0$.\\

Using previous proposition one can deduce that, if $\varphi$ is a surjective cdga model for $f$, then \[\secat(f_0)\le \hnil (\ker\varphi)\le \nil\ \ker\ \varphi.\]

In fact, if $(\ker \varphi)^{m+1}\subset J$ with $J$ an acyclic ideal of $(A,d)$. Then, by the five lemma, there is a diagram

\begin{equation*}
\begin{tikzcd}
&(A,d)\arrow[dl, twoheadrightarrow, "\simeq"']\ar[dr, twoheadrightarrow, "\rho_m"]&\\
\left(\frac{A}{J},\overline{d}\right)&&\left(\frac{A}{(\ker\varphi)^{m+1}},\overline{d}\right)\ar[ll]
\end{tikzcd}
\end{equation*}
which can be used, together with the lifting lemma, to give a homotopy retraction for $\rho_m$.\\

As a consequence, if $f$ is a formal map such that $\Ho^*(f,\mQ)$ is surjective, then $\secat(f_0)=\nil \ker \Ho^*(f,\mQ)$. The hypothesis of $\Ho^*(f,\mQ)$ being surjective is necessary:

\begin{example}
Consider the cdga morphism $\varphi\colon (\Lambda a_2,0)\rightarrow \left(\frac{\Lambda (a_2,b_2)}{(a^2+b^2)},0\right)$ defined by $\varphi(a)=a$. Let $f$ be a continuous map whose model is $\varphi$ (the spatial realization of $\varphi$, for instance), then $\Ho^*(f,\mQ)=\varphi$. Obviously $\nil\ker\varphi=0$ but $\varphi$ does not admit a homotopy retraction. In fact, a relative Sullivan model for $\varphi$ is given by

\begin{equation*}
\begin{tikzcd}
(\Lambda (a),0)\ar[r, "\varphi"]\ar[dr, hook, "j"']&\left(\frac{\Lambda (a,b)}{(a^2+b^2)},0\right)\\
&(\Lambda (a)\otimes\Lambda (b,v_1,v_2,\ldots), D),\ar[u, "\simeq", "\theta"']
\end{tikzcd}
\end{equation*}
with $D(b)=0$, $D(v_1)=a^2+b^2$, $\theta(b)=b$, $\theta(v_1)=0$, and $|v_i|\ge 4$ for $i\ge 2$. But, as in Example \ref{example:Stanley}, $j$ cannot have a cdga retraction. This means that $\secat(f_0)\ge 1$.
\end{example}

%\subsection{When $f$ is co-homology surjective}
%Suppose $\varphi\colon (A,d)\twoheadrightarrow (B,d)$ is a surjective model for $f$ and $\Ho(\varphi)=H^*(f,\mQ)$ is surjective, then we have a long exact sequence \[0\hookrightarrow \Ho(\ker\varphi)\hookrightarrow \Ho(A,d)\twoheadrightarrow \Ho(B,d)\twoheadrightarrow 0,\]
%which says that $\Ho(\ker\varphi)\cong \ker \Ho(\varphi)$.

%\begin{theorem}
%	Let $f\colon X\rightarrow Y$ be a map with $\dim X=n$ and whose homotopy cofibre is $r$-connected. Then $\secat(f_0)\le$ 
%\end{theorem}
\subsection{When $f$ admits a homotopy retraction}

Now suppose that our continuous map $f$ admits a homotopy retraction. Observe that this is the case of the path fibrations $\pi_n\colon X^I\fib X^n$. We now construct a special type of model for such maps which we will call s-models.\\

We can suppose that $f\colon X\cofib Y$ is a cofibration and that it admits a strict retraction $r\colon Y\rightarrow X$, so that $r\circ f=\id_X$. Now, take a surjective quasi-isomorphism $\theta\colon (\Lambda V,d)\rightarrow \apl(X)$ and a relative Sullivan model

\begin{equation*}
\begin{tikzcd}
\apl(X)\ar[r, "\apl(r)"]&\apl(Y)\\
(\Lambda V,d)\ar[u, twoheadrightarrow,"\simeq"',"\theta"]\ar[r, hook]&(\Lambda V\otimes \Lambda W,D).\ar[u, "\simeq", "\xi"']
\end{tikzcd}
\end{equation*}
Now the lifting lemma
\begin{equation*}
\begin{tikzcd}
(\Lambda V,d)\ar[rr, "\id"]\ar[d, hookrightarrow]&&(\Lambda V,d)\ar[d, twoheadrightarrow, "\simeq"', "\theta"]\\
(\Lambda V\otimes\Lambda W, D)\ar[urr, dashed, "\varphi'"]\ar[r, "\xi"']&\apl(Y)\ar[r, "\apl(f)"']&\apl(X),
\end{tikzcd}
\end{equation*}
gives a cdga morphism, $\varphi'$, which is a model for $f$. Now suppose $(A,d)$ is any cdga model for $X$. Then there is a pushout diagram
\begin{equation*}
\begin{tikzcd}
(\Lambda V,d\ar[d, "\psi"', "\simeq"])\ar[r, hookrightarrow]&(\Lambda V\otimes\Lambda W, D)\ar[d, "\simeq"]\ar[ddr, bend left, "\psi\circ\varphi'"]&\\
(A,d)\ar[drr, bend right, "\id_A"]\ar[r, hookrightarrow]&(A\otimes \Lambda W, \overline{D})\ar[dr, dashed, "\varphi"]&\\
&&(A,d),
\end{tikzcd}
\end{equation*}
which gives a model $\varphi$ for $f$ which is a retraction for the inclusion \[(A,d)\cofib (A\otimes\Lambda W,\overline{D}).\] Such a model is said to be an \emph{s-model} for $f$.\\

We can now state a generalization of the F\'elix-Halperin theorem which lets us compute the topological complexity of rational spaces.

\begin{theorem}[\cite{Carrasquel14c}]\label{th:CharacterisationSecat} Let $f$ be a continuous map and $\varphi\colon (A\otimes\Lambda W,D)\rightarrow (A,d)$ an s-model for $f$. Then $\secat(f_0)$ is the smallest $m$ for which the cdga projection \[\rho_m\colon (A\otimes \Lambda W, D)\rightarrow \left(\frac{A\otimes \Lambda W}{(\ker\varphi)^{m+1}}, \overline{D}\right)\] admits a homotopy retraction. Also,
\begin{itemize}
	\item $\msecat(f)$ is the smallest $m$ for which $\rho_m$ admits a homotopy retraction as $(A\otimes\Lambda W,D)$-module, and
	
	\item $\hsecat(f)$ is the smallest $m$ for which $\Ho(\rho_m)$ is injective.
\end{itemize}
\end{theorem}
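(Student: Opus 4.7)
The strategy is to generalize the proof of the F\'elix--Halperin theorem (Theorem \ref{th:FelixHalperinCharac}), viewing the s-model $\varphi\colon(A\otimes\Lambda W,D)\to(A,d)$ as the sectional-category analogue of the augmentation $\epsilon\colon(\Lambda V,d)\to(\mQ,0)$. Under this analogy the statement is a direct generalization: when $f$ is the base point inclusion and $\varphi=\epsilon$, we have $(\ker\varphi)^{m+1}=\Lambda^{>m}V$, so Theorem \ref{th:FelixHalperinCharac} is recovered.

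The upper bound direction is essentially free. If $\rho_m$ admits a cdga (respectively $(A\otimes\Lambda W,D)$-module, respectively homology) retraction then, since $\varphi$ is itself a surjective cdga model for $f$, Proposition \ref{prop:upperBoudnSecat} applied to $\varphi$ immediately gives the corresponding inequality $\secat(f_0)\le m$ (resp.\ $\msecat(f)\le m$, $\hsecat(f)\le m$).

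For the converse my plan is to show that $\rho_m$ is itself a cdga model for the rational $m$-th Ganea fibration of $f$. Following Section \ref{ssec:FirstAlgCharac} applied to the surjective model $\varphi$, the $m$-th fat wedge is modeled by
\[
\pi\colon ((A\otimes\Lambda W)^{\otimes m+1}, D^{\otimes m+1})\fib \left(\frac{(A\otimes\Lambda W)^{\otimes m+1}}{(\ker\varphi)^{\otimes m+1}}, \overline D\right),
\]
and pushing a relative Sullivan model of $\pi$ along the multiplication $\mu\colon(A\otimes\Lambda W)^{\otimes m+1}\to(A\otimes\Lambda W)$ yields the Ganea cofibration $i'_m\colon(A\otimes\Lambda W,D)\cofib (A\otimes\Lambda W\otimes\Lambda Z,\hat D)$. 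Since $\mu$ sends $(\ker\varphi)^{\otimes m+1}$ into $(\ker\varphi)^{m+1}$, the universal property of the pushout produces a cdga morphism $\Psi$ from $(A\otimes\Lambda W\otimes\Lambda Z,\hat D)$ to the target of $\rho_m$ satisfying $\Psi\circ i'_m=\rho_m$.

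The crux is to verify that $\Psi$ is a quasi-isomorphism. This is where the s-model hypothesis is essential: the retraction $\varphi$ provides a splitting $A\otimes\Lambda W\cong i(A)\oplus\ker\varphi$ of $(A,d)$-modules, so the filtration by powers of $\ker\varphi$ plays the role of the word-length filtration of a Sullivan algebra, and a filtration argument analogous to the one used in \cite{Bible} for the LS case shows $\Psi$ to be a quasi-isomorphism. I expect this step to be the main obstacle, as one must track carefully how the pushout differential $\hat D$ interacts with powers of $\ker\varphi$, and it is precisely the s-model retraction property that rules out pathologies of the type illustrated in Example \ref{example:FelixHalpNeedSullivan}. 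Once $\Psi$ is established as a quasi-isomorphism, any (cdga / module / homology-injective) retraction of $i'_m$ supplied by the Ganea characterization of $\secat(f_0)\le m$ (resp.\ $\msecat(f)\le m$, $\hsecat(f)\le m$) can be transferred to $\rho_m$ via the lifting lemma, delivering the three equivalences simultaneously.
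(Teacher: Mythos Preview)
The paper you are working from is a survey and does not supply its own proof of this theorem: it is stated with a citation to \cite{Carrasquel14c} and followed only by the remark that the augmentation case recovers Theorem \ref{th:FelixHalperinCharac}. So there is no in-paper argument to compare your proposal against; the only direction the paper does justify is the upper bound, via Proposition \ref{prop:upperBoudnSecat}, exactly as you note.

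Your outline for the converse is the right shape and is, as far as I can tell, the strategy of \cite{Carrasquel14c}: model the fat wedge of the surjection $\varphi$, push out along the multiplication $\mu$ to obtain the Ganea model $i'_m$, and then compare the result to $\rho_m$ via an induced map $\Psi$. You have also correctly isolated the genuine content of the proof, namely that $\Psi$ is a quasi-isomorphism, and correctly identified that the s-model hypothesis (the retraction $\varphi$) is what makes this go through where a bare surjective model would not.

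That said, your proposal stops short of a proof at exactly this point. Saying that ``a filtration argument analogous to the one used in \cite{Bible} for the LS case'' establishes the quasi-isomorphism is not yet an argument: in the LS case the filtration is by word length in a free algebra, whereas here one filters by powers of $\ker\varphi$ in $(A\otimes\Lambda W,D)$, and one must check that the associated graded objects on both sides of $\Psi$ really do match. Concretely, one needs that the $(A,d)$-module splitting $A\otimes\Lambda W\cong A\oplus\ker\varphi$ induces, after iteration, a decomposition compatible with the differential $\hat D$ on the pushout, so that on associated gradeds $\Psi$ becomes an identification one can read off directly. This is not automatic and is where the work lies; until you write it out, the converse direction remains a plan rather than a proof.
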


Observe that the augmentation is an s-model for the base point inclusion. In this case, previous theorem is just the F\'elix-Halperin Theorem \ref{th:FelixHalperinCharac}.

\subsection{Applications to topological complexity}
Let $(A,d)$ be any cdga model for a space $X$. We build an s-model for the path fibration $\pi_n\colon X^I\rightarrow X^n$ as follows: Take $(\Lambda V,d)$ a Sullivan model for $X$ (not necessarily minimal) and take a quasi-isomorphism $\theta\colon(\Lambda V,d)\we (A,d)$. Then the cdga morphism \[\mu^\theta_n:=(\id_A,\theta,\ldots,\theta)\colon (A,d)\otimes(\Lambda V,d)^{\otimes n-1}\rightarrow (A,d)\] is an s-model for $\pi_n$. Moreover, as with the multiplication ($\mu_n=\mu_n^\id$), one can see that $\ker\mu_n^\theta=(\theta(v)- v_i)$ where, as usual, $v_i$, $i=2,\ldots,n$ stands for $v$ in the $i$-th factor. As a corollary to Theorem \ref{th:CharacterisationSecat}, we get

\begin{corollary}\label{cor:characterizationTC}
	Let $X$ be a space, $(\Lambda V,d)$ a Sullivan model for $X$, and \[\theta\colon(\Lambda V,d)\we (A,d)\] a quasi-isomorphism. Then $\tc_n(X_0)$ is the least $m$ such that the projection \[\rho_m\colon (A\otimes(\Lambda V)^{\otimes n-1},d)\rightarrow \left(\frac{A\otimes(\Lambda V)^{\otimes n-1}}{(\ker\mu_n^\theta)^{m+1}},\overline{d}\right)\] admits a homotopy retraction. Moreover, $\mtc_n(X)$ is the least $m$ such that $\rho_m$ admits a retraction as $(A\otimes(\Lambda V)^{\otimes n-1},d)$-module and $\htc_n(X)$ is the least $m$ such that $\Ho(\rho_m)$ is injective.
\end{corollary}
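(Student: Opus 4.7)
The plan is to recognise this corollary as a direct application of Theorem~\ref{th:CharacterisationSecat} to $f=\Delta_X^n$, using the fact that $\Delta_X^n$ is homotopy equivalent to the path fibration $\pi_n\colon X^I\fib X^n$ (see Section~\ref{sec:modelForDiagonal}). Since $\pi_n$ admits a strict retraction (pick the inclusion of constant paths $X\cofib X^I$), the setup for building an s-model, as described just before Theorem~\ref{th:CharacterisationSecat}, applies. The whole task is therefore to exhibit $\mu_n^\theta$ as an s-model for $\pi_n$ and to identify its kernel; then the three claimed characterizations of $\tc_n(X_0)$, $\mtc_n(X)$ and $\htc_n(X)$ follow by specialising Theorem~\ref{th:CharacterisationSecat}.

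First I would check that $\mu_n^\theta\colon (A\otimes(\Lambda V)^{\otimes n-1},d)\to (A,d)$ satisfies all three defining properties of an s-model. It factors as
\[
(A\otimes(\Lambda V)^{\otimes n-1},d)\xrightarrow{\id_A\otimes\theta^{\otimes n-1}}(A^{\otimes n},d)\xrightarrow{\mu_n}(A,d),
\]
where the first arrow is a quasi-isomorphism (tensor product of quasi-isomorphisms) and the second is the standard multiplicative model of $\Delta_X^n$ from Section~\ref{sec:modelForDiagonal}; hence $\mu_n^\theta$ is a cdga model of $\Delta_X^n$ and therefore of $\pi_n$. Next, the inclusion of the first tensor factor $(A,d)\cofib (A\otimes(\Lambda V)^{\otimes n-1},d)$ is a relative Sullivan extension: the added generators $W=V^{\oplus n-1}$ inherit the nilpotent filtration from the Sullivan filtration on each copy of $V$. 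Finally, $\mu_n^\theta$ restricted to the first factor is the identity, i.e.\ $\mu_n^\theta(a\otimes 1\otimes\cdots\otimes 1)=a$, so it is a retraction for this inclusion. This confirms that $\mu_n^\theta$ is an s-model for $\pi_n$ in the sense of the paragraph preceding Theorem~\ref{th:CharacterisationSecat}.

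Second I would compute $\ker\mu_n^\theta$. By the argument recalled in Section~\ref{sec:modelForDiagonal}, the kernel of the ordinary multiplication $\mu_n\colon A^{\otimes n}\to A$ is generated, as an ideal, by elements of the form $x_1-x_i$ for $x$ running over a generating set of $A^+$ and $i=2,\ldots,n$. Pulling back these generators along $\id_A\otimes\theta^{\otimes n-1}$ and using $A^+=(\theta(V^+))$ together with the fact that $\theta$ is surjective on cohomology (so any additional cycles in $A$ already come from $\theta(V)$ up to boundaries) one obtains that $\ker\mu_n^\theta$ is generated by the elements $\theta(v)-v_i$, $v\in V$, $i=2,\ldots,n$, with $v_i$ denoting $v$ in the $i$-th tensor factor. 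Once this is established, Theorem~\ref{th:CharacterisationSecat} applied to $f=\Delta_X^n$ with the s-model $\varphi=\mu_n^\theta$ and $A\otimes\Lambda W=A\otimes(\Lambda V)^{\otimes n-1}$ yields exactly the three statements of the corollary. The main obstacle I anticipate is the kernel computation when $\theta$ is not required to be surjective, as one must argue carefully that no extra generators are needed beyond $\theta(v)-v_i$; this can be handled by comparing $\mu_n^\theta$ with $\mu_n$ via the quasi-isomorphism $\id_A\otimes\theta^{\otimes n-1}$ and tracking generators of the kernel through the factorisation.
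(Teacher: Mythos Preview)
Your approach is exactly the paper's: exhibit $\mu_n^\theta$ as an s-model for $\pi_n$ and apply Theorem~\ref{th:CharacterisationSecat}. One small correction on the kernel step: the route via ``pulling back'' generators of $\ker\mu_n$ and the claim $A^+=(\theta(V^+))$ is neither needed nor generally valid (nothing forces $\theta$ to hit algebra generators of $A$). The computation is more direct: the algebra $A\otimes(\Lambda V)^{\otimes n-1}$ is generated over $A$ by the elements $v_i$ ($v\in V$, $i=2,\ldots,n$), and modulo the ideal $I=(\theta(v)-v_i)$ every such $v_i$ becomes $\theta(v)\in A$, so the quotient is exactly $A$ with $\mu_n^\theta$ inducing the identity; hence $\ker\mu_n^\theta=I$ with no hypothesis on $\theta$ beyond being a cdga morphism.
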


\begin{example} Compare \cite[Example 6.5]{FGKV06} and \cite[Example 3.5]{Grant}. Let $X=S_a^3\vee S_b^3\cup e^8\cup e^8$ where the cells are attached through the iterated Whitehead products $[S_a^3,[S_a^3,S_b^3]]$ and $[S_b^3,[S_a^3,S_b^3]]$. By \cite[p. 179]{Bible}, the minimal model of $X$ is of the form $(\Lambda V,d)$ with $V^{\le 10}=\mQ\langle a,b,x,y\rangle$, $|a|=|b|=3$, $d(a)=d(b)=0$, $d(x)=ab$, $d(y)=abx$ and $\Ho(\Lambda V,d)=\mQ\langle1,[a],[b],[ax],[bx]\rangle$. Therefore, writing  \[(A,d):= \left(\frac{\Lambda(a,b,x)}{(abx)},d\right),\] the projection $\theta\colon (\Lambda V,d)\rightarrow(A,d)$ is a quasi-isomorphism and an s-model for the diagonal map $\Delta_X$ is 
\[\mu^\theta\colon\left(\frac{\Lambda(a_1,b_1,x_1)}{(abx)}\otimes\Lambda\left( \langle a_2,b_2,x_2,y_2\rangle\oplus V^{\ge 11}\right),d\right)\rightarrow \left(\frac{\Lambda(a,b,x)}{(abx)},d\right).\] Now consider the element $\omega:=(a_1-a_2)(b_1-b_2)(x_1-x_2)\in (\ker \mu^\theta)^{3}$. A computation yields that $\omega=a_1x_1b_2-b_1a_2x_2+a_1b_2x_2-b_1x_1a_2-d(x_1x_2+y_2)$.	This means that $[\omega]\in \ker \Ho(\rho_2)$ is non-zero. Therefore $\htc(X)\ge 3$. On the other hand, the $\nil \ker \mu=3$ where $\mu\colon A\otimes A\rightarrow A$ is the multiplication. This proves that $3=\tc(X_0)\le \tc(X)$ whilst $\nil \ker \Ho^*(\Delta_X^2,\mQ)=2$.
\end{example}

Finally we have

\begin{theorem}[\cite{Carrasquel15b}] If $X$ and $Y$ are topological spaces, then \[\mtc_n(X\times Y)=\mtc_n(X)+\mtc_n(Y).\]
	
\end{theorem}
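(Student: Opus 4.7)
The approach will parallel the F\'elix--Halperin--Lemaire additivity formula for $\cat$ (Theorem \ref{th:FelixHalpLemai}), transposed via the characterization of Corollary \ref{cor:characterizationTC}. Choose quasi-isomorphisms $\theta_X\colon(\Lambda V,d)\we(A,d)$ and $\theta_Y\colon(\Lambda W,d)\we(B,d)$ from Sullivan models, so that $\theta_X\otimes\theta_Y$ models $X\times Y$. By the construction of Section \ref{sec:modelForDiagonal}, the morphism
\[\mu_n^{\theta_X}\otimes\mu_n^{\theta_Y}\colon R:=R_X\otimes R_Y\longrightarrow A\otimes B,\]
with $R_X=A\otimes(\Lambda V)^{\otimes n-1}$ and $R_Y=B\otimes(\Lambda W)^{\otimes n-1}$, is an s-model for $\pi_n^{X\times Y}$; its kernel is $K=K_X\otimes R_Y+R_X\otimes K_Y$ where $K_X=\ker\mu_n^{\theta_X}$ and $K_Y=\ker\mu_n^{\theta_Y}$.

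For the upper bound $\mtc_n(X\times Y)\le p+q$ with $p:=\mtc_n(X)$ and $q:=\mtc_n(Y)$, I would first verify by expansion and a pigeonhole argument that $K^{p+q+1}\subset K_X^{p+1}\otimes R_Y+R_X\otimes K_Y^{q+1}$, so that $\rho_{p+q}^{X\times Y}$ factors through $\rho_p^X\otimes\rho_q^Y$. Next, choose relative Sullivan models $R_X\cofib R_X\otimes\Lambda Z_X$ and $R_Y\cofib R_Y\otimes\Lambda Z_Y$ of $\rho_p^X$ and $\rho_q^Y$ with module retractions $r_X,r_Y$ (which exist by Corollary \ref{cor:characterizationTC}). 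Their tensor product is a relative Sullivan model of $\rho_p^X\otimes\rho_q^Y$, and $r_X\otimes r_Y$ is an $R$-module retraction of it. An application of the lifting lemma to any relative Sullivan model $R\cofib R\otimes\Lambda Z$ of $\rho_{p+q}^{X\times Y}$ yields a cdga map $R\otimes\Lambda Z\to R\otimes\Lambda(Z_X\oplus Z_Y)$ under $R$; composing with $r_X\otimes r_Y$ gives the required $R$-module retraction, finishing the upper bound.

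The reverse inequality $\mtc_n(X\times Y)\ge p+q$ is the delicate direction. My plan is to replace the Ganea model of $\pi_n$ by the semi-free model of Section \ref{ssec:FirstAlgCharac} and to detect the failure of $\rho_m^X$ to admit a module retraction by a nonvanishing primary obstruction class $o_X^m\in\mathrm{Ext}^{*,*}_{R_X}(R_X/K_X^{m+1},R_X)$ (and similarly for $Y$) obtained from a semi-free resolution. One then constructs an external product $o_X^{p-1}\boxtimes o_Y^{q-1}$ exhibiting a nonzero obstruction to the existence of an $R$-module retraction of $\rho_{p+q-1}^{X\times Y}$. The main obstacle will be this K\"unneth-type step: verifying that the external product of two nonzero primary obstructions stays nonzero after passing to cohomology. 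This requires working with explicit semi-free resolutions, exploiting that $\mQ$ is a field (so tensor products are exact) and that tensor products of semi-free extensions remain semi-free, combined with careful bookkeeping of cocycles across the product to rule out cancellations; this is a module-theoretic enhancement of the cohomological K\"unneth estimate underlying the additivity of the standard nilpotency bound.
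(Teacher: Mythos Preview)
The paper itself does not prove this theorem: it is stated as a result quoted from \cite{Carrasquel15b} with no argument given here, so there is no ``paper's own proof'' to compare against. I can only assess your proposal on its own terms and against the general strategy of \cite{Carrasquel15b}.

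Your upper bound is fine and essentially the standard argument: the pigeonhole inclusion $K^{p+q+1}\subset K_X^{p+1}\otimes R_Y+R_X\otimes K_Y^{q+1}$ gives a factorization of $\rho_p^X\otimes\rho_q^Y$ through $\rho_{p+q}^{X\times Y}$, the lifting lemma transports this to the level of relative Sullivan models, and $r_X\otimes r_Y$ is then an $R$-module retraction which pulls back. This part would be accepted as written.

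The lower bound, however, is not a proof as it stands. You correctly identify the K\"unneth step for obstruction classes as the crux, but you do not supply the mechanism that makes it work. Two concrete issues: first, the ``primary obstruction'' $o_X^m$ you invoke is not in general a single well-defined class in $\mathrm{Ext}_{R_X}(R_X/K_X^{m+1},R_X)$; the obstruction to a module retraction lives in a tower of such groups and vanishes stage-by-stage, so one needs either a single explicit cocycle witness or a more structured argument. Second, and more seriously, even granting two nonzero classes, their external product in $\mathrm{Ext}_R$ can vanish, and you give no reason why it should not here. The actual proof in \cite{Carrasquel15b} does not proceed via abstract obstruction theory but rather works with the explicit semi-free Ganea models $j_m$ described in this survey (the $(A\oplus A\otimes s^{-m}X^{\otimes m+1},d)$ construction), exploiting their multiplicativity under products of maps to compare $j_{p+q-1}^{X\times Y}$ directly with a join of $j_{p-1}^X$ and $j_{q-1}^Y$. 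That concrete model is what lets one transport the \emph{non}-existence of a retraction, in the spirit of the F\'elix--Halperin--Lemaire argument for $\mcat$; your Ext-product outline would need substantial additional input to reach the same conclusion.
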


\section{A mapping theorem for topological complexity}
In this section we will give a slight generalization of the mapping theorem for rational topological complexity of Grant-Lupton-Oprea\cite[Theorem 3.2]{Grant} and give a proof using Sullivan models.

\begin{theorem}\label{th:MappingThTC}
	Let $f_i\colon Y_i\rightarrow X$ be continuous maps, $i=1,\ldots,n$, between rational spaces such that $\pi_*(f_i)$ are injective. If $\im\ (\pi_*(f_1))\cap \im\ (\pi_*(f_2))=0$ then \[\cat(Y_1)+\cdots+\cat(Y_n)\le\tc_n(X).\]
\end{theorem}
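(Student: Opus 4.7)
The strategy is to imitate the proof of the LS-category mapping theorem (Theorem \ref{th:MappingThCat}): replace the based path fibration $p\colon PX\fib X$ by the path fibration $\pi_n\colon X^I\fib X^n$ associated to $\Delta_X^n$, and pull it back along the product map $f:=f_1\times\cdots\times f_n\colon Y_1\times\cdots\times Y_n\to X^n$ to obtain a fibration $q\colon E\to Y:=Y_1\times\cdots\times Y_n$. Since $q$ is a pullback of $\pi_n$ we automatically have $\secat(q)\le\secat(\pi_n)=\tc_n(X)$, so the whole argument reduces to proving that $q$ is rationally null-homotopic. A null-homotopic map into $Y$ has sectional category equal to $\cat(Y)$, because a local section of $q$ over $U\subset Y$ forces the inclusion $U\cofib Y$ to be null-homotopic while categorical open sets trivially admit local sections of a null-homotopic map. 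Combined with the rational product formula $\cat(Y_1\times\cdots\times Y_n)=\sum_i\cat(Y_i)$ of Theorem \ref{th:FelixHalpLemai}, this will yield $\sum_i\cat(Y_i)=\cat(Y)=\secat(q)\le\tc_n(X)$.

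To build the model, let $(\Lambda V,d)$ be the minimal Sullivan model of $X$ and $(\Lambda W_i,d)$ those of the $Y_i$; injectivity of each $\pi_*(f_i)$ lets us model $f_i$ by a surjective cdga morphism $\varphi_i\colon\Lambda V\fib\Lambda W_i$, so $f$ is modeled by $\varphi_1\otimes\cdots\otimes\varphi_n$. Using the relative Sullivan model $(\Lambda V^{\otimes n}\otimes\Lambda\hat V^{\oplus n-1},D)$ of $\mu_n$ from Section \ref{sec:modelForDiagonal}, the homotopy pullback defining $E$ is modeled by the pushout $(\Lambda W\otimes\Lambda\hat V^{\oplus n-1},\bar D)$, where $W:=W_1\oplus\cdots\oplus W_n$, and $q$ is modeled by the inclusion $j\colon\Lambda W\hookrightarrow\Lambda W\otimes\Lambda\hat V^{\oplus n-1}$. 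Its linear part is $\bar D_0(\hat v_i)=Q(\varphi_{i+1})(v)-Q(\varphi_i)(v)$, and all non-linear terms of $\bar D\hat v_i$ lie in $\Lambda^+W\otimes\Lambda\hat V^{\oplus n-1}$. Indeed, in the formula $D\hat v_i=v_{i+1}-v_i-\sum_{k\ge 1}\frac{(s_iD)^k}{k!}(v_i)$ the derivation $s_i$ satisfies $s_i(v)\in\hat V$ and $s_i(\hat v)=0$ while $dv\in\Lambda^{\ge 2}V$, so tracking the number of $V$-factors in each monomial of $(s_iD)^k(v_i)$ shows every such monomial retains at least one $V$-factor, which becomes a $\Lambda^+W$-factor after pushforward through $\varphi$.

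The crucial computation is that $\bar D_0\colon\hat V^{\oplus n-1}\to W$ is surjective. Its transpose sends a tuple $(\alpha_1,\ldots,\alpha_n)\in\bigoplus_i\pi_*(Y_i)\otimes\mQ$ to $(\pi_*(f_{i+1})(\alpha_{i+1})-\pi_*(f_i)(\alpha_i))_{i=1}^{n-1}$, whose vanishing means $\pi_*(f_1)(\alpha_1)=\cdots=\pi_*(f_n)(\alpha_n)$; the hypothesis $\im\pi_*(f_1)\cap\im\pi_*(f_2)=0$ forces this common value to vanish, and injectivity of each $\pi_*(f_i)$ then forces $\alpha_i=0$ for every $i$, so the transpose has trivial kernel. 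Set $Z:=\ker\bar D_0$, split $\hat V^{\oplus n-1}=Z\oplus V'$ with $\bar D_0|_{V'}$ an isomorphism onto $W$, and define $\pi\colon(\Lambda W\otimes\Lambda\hat V^{\oplus n-1},\bar D)\to(\Lambda Z,0)$ on generators by $\pi(W)=\pi(V')=0$ and $\pi|_Z=\id$. Because $\bar D(W)\subset\Lambda^+W$ and $\bar D(\hat V^{\oplus n-1})\subset\Lambda^+W\otimes\Lambda\hat V^{\oplus n-1}$, the map $\pi$ is compatible with differentials, and its linear part $Q(\pi)$ is the projection onto the cohomology of the two-term complex $(W\oplus\hat V^{\oplus n-1},\bar D_0)$; by Proposition \ref{prop:SullivanAlgs}, $\pi$ is therefore a cdga quasi-isomorphism. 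Since $\pi\circ j$ annihilates every positive-degree element of $\Lambda W$, the map $j$ is weakly equivalent to the augmentation, so $q$ is rationally null-homotopic. The main obstacle of the argument is precisely this surjectivity of $\bar D_0$, which is exactly where both hypotheses enter.
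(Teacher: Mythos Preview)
Your proof is correct and follows essentially the same route as the paper's: pull back $\pi_n$ along $f_1\times\cdots\times f_n$, model the result by the pushout $(\Lambda W\otimes\Lambda\hat V^{\oplus n-1},\bar D)$, use the surjectivity of $\bar D_0$ (equivalently, injectivity of its dual on rational homotopy) together with $\bar D(\hat V)\subset\Lambda^+W\otimes\Lambda\hat V$ to build a quasi-isomorphism onto $(\Lambda Z,0)$ killing $W$, conclude that $q$ is null-homotopic, and finish with the rational product formula for $\cat$. The only differences are expository: you argue the surjectivity of $\bar D_0$ by dualizing to rational homotopy groups, whereas the paper exhibits explicit preimages in $V$; and you spell out the tracking-of-$V$-factors argument for $\bar D(\hat V)\subset\Lambda^+W\otimes\Lambda\hat V$, which the paper simply asserts (it follows from the fiber $(\Omega X)^{n-1}$ being an H-space, as in Section~\ref{sec:modelForDiagonal}).
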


\begin{proof}
Since $\pi_*(f_i)$ is injective, there is a surjective model for $f_i$, \[\varphi_i\colon (\Lambda V,d)\fib(\Lambda W_i,d)\] between minimal Sullivan models. The condition $\im\ (\pi_*(f_1))\cap \im\ (\pi_*(f_2))=0$ implies that for each $w_1\in W_1$ and $w_2\in W_2$, there exist $v_1,v_2\in V$ such that $\varphi_1(v_1)=w_1$, $\varphi_2(v_1)=0$, $\varphi_1(v_2)=0$ and $\varphi_2(v_2)=w_2$. As in the proof of Theorem \ref{th:MappingThCat}, we model the (homotopy) pullback
\begin{equation*}
\begin{tikzcd}[column sep=large]
P\ar[r]\ar[d, twoheadrightarrow, "q"']&X^I\ar[d, twoheadrightarrow, "\pi_n"]\\
Y_1\times\cdots \times Y_n\ar[r, "f_1\times\cdots\times f_n"']&X^n,
\end{tikzcd}
\end{equation*}
through the pushout
\begin{equation*}
\begin{tikzcd}%[row sep=tiny]
(\Lambda V_1\otimes\cdots\otimes\Lambda V_n,d)\ar[d, hook, "i"']\ar[r, twoheadrightarrow, "\varphi_1\otimes\cdots\otimes\varphi_n"]&(\Lambda W_1\otimes\cdots\otimes\Lambda W _n,d)\ar[d, hook, "j"]\\
(\Lambda V_1\otimes\cdots\otimes\Lambda V_n\otimes\Lambda \hat{V},D)\ar[r, twoheadrightarrow]&(\Lambda W_1\otimes\cdots\otimes\Lambda W_n\otimes\Lambda \hat{V},\overline{D}),
\end{tikzcd}
\end{equation*}
where $i$ is the relative model for $\pi_n$ from Section \ref{sec:modelForDiagonal}, $j$ is a model for $q$, $\hat{V}=\hat{V_1}\oplus\cdots\oplus\hat{V}_{n-1}$, and $\overline{D}_0(\hat{v}_i)=\varphi_{i+1}(v_{i+1})-\varphi_i(v_i)$. We will prove that $q$ is trivial. Observe that, by the properties of the $\varphi_i$'s, $\overline{D}_0\colon \hat{V}\rightarrow W_1\oplus\cdots\oplus W_n$ is surjective. Now take $Z:=\ker \overline{D}_0\subset \hat{V}$. Since $\overline{D}(\hat{V})\subset \Lambda^+(W_1\oplus\cdots\oplus W_n)\otimes\Lambda \hat{V}$, the projection \[\xi\colon (\Lambda W_1\otimes\cdots\otimes\Lambda W_n\otimes\Lambda \hat{V},\overline{D})\fib (\Lambda Z,0)\] is well defined. By construction, $\Ho(Q(\xi))\colon\Ho(W_1\oplus\cdots W_n\oplus\hat{V},\overline{D}_0)\rightarrow \Ho(Z,0)$ is an isomorphism, therefore, by Proposition \ref{prop:SullivanAlgs}, $\xi$ is a quasi isomorphism. Since $\xi\circ j$ is trivial we have that $q$ is trivial, thus \[\cat(Y_1\times\cdots\times Y_n)=\secat(q)\le\secat(\pi_n)=\tc_n(X).\]

But by Theorem \ref{th:FelixHalpLemai}, $\cat(Y_1\times\cdots\times Y_n)=\cat(Y_1)+\cdots+\cat(Y_n)$.
\end{proof}

\section*{Acknowledgements} 
The author thanks the organizers of \textit{Workshop on TC and Related Topics}, held in Oberwolfach in March 2016 and acknowledges the Belgian Interuniversity Attraction Pole (IAP) for support within the framework ``Dynamics, Geometry and Statistical Physics'' (DYGEST).

\providecommand{\bysame}{\leavevmode\hbox to3em{\hrulefill}\thinspace}
\providecommand{\MR}{\relax\ifhmode\unskip\space\fi MR }
% \MRhref is called by the amsart/book/proc definition of \MR.
\providecommand{\MRhref}[2]{%
	\href{http://www.ams.org/mathscinet-getitem?mr=#1}{#2}
}
\providecommand{\href}[2]{#2}

\end{document}